\documentclass[12pt]{amsart}

\textwidth=15.5cm
\oddsidemargin=-0.15in \evensidemargin=-0.15in

\usepackage{amscd}
\usepackage{srcltx}
\usepackage{amsmath}
\usepackage{amsfonts}
\usepackage{verbatim}
\usepackage{amsthm}
\usepackage{amssymb, mathrsfs}
\usepackage{amscd}
\usepackage{latexsym}
\usepackage{array}
\usepackage{enumerate}
\usepackage{longtable}
\usepackage[all]{xypic}
\usepackage[latin1]{inputenc}

\ifx\pdfpagewidth\undefined % bessere Schrift bei PDF
\usepackage[T1]{fontenc}
\else % = bessere Schrift bei PDF
\usepackage[OT1]{fontenc} % = bessere Schrift bei PDF
\fi % = bessere Schrift bei PDF
\usepackage{amsfonts,amssymb,latexsym,longtable,amsmath}

\newtheorem{Theorem}{Theorem}[section]
\newtheorem{cor}[Theorem]{Corollary}

\newtheorem{Definition}[Theorem]{Definition}

\newtheorem{Example}[Theorem]{Example}

\newtheorem{Lemma}[Theorem]{Lemma}

\newtheorem{Proposition}[Theorem]{Proposition}
\newtheorem{theor}[Theorem]{Theorem}

\newcommand{\R}{\mathbb R}
\newcommand{\C}{\mathbb C}

\newcommand{\T}{\mathbb T}

\newcommand{\Q}{\mathbb Q}

\newcommand{\sB} {{\mathcal B}}

\newcommand{\Ga} {{\Gamma}}

\newcommand{\lra}{\longrightarrow}

\renewcommand{\phi}{\varphi}
\renewcommand{\theta}{\vartheta}

\newcommand{\irn}{\textrm{irrep}_n(G)}
\newcommand{\ir}{\textrm{irrep}(G)}
\newcommand{\irns}{\textrm{irrep}_n}

\newcommand{\rep}{\textrm{rep}}
\newcommand{\repg}{\textrm{rep}(G)}

\newcommand{\repng}{\textrm{rep}_{n}(G)}

\newcommand{\U}{\mathbb{U}}

\newcommand{\W}{\operatorname{w}}
\newcommand{\D}{\operatorname{d}}

\begin{document}
\title{Dual topologies on non-abelian groups}

\author[M. Ferrer]{M. Ferrer}
\address{Universitat Jaume I, Instituto de Matem\'aticas de Castell\'on,
Campus de Riu Sec, 12071 Castell\'{o}n, Spain.}
\email{mferrer@mat.uji.es}
\author[S. Hern\'andez]{S. Hern\'andez}
\address{Universitat Jaume I, Departamento de Matem\'{a}ticas,
Campus de Riu Sec, 12071 Castell\'{o}n, Spain.}
\email{hernande@mat.uji.es}

\thanks{ The first-listed
author acknowledges partial support by Generalitat Valenciana,
grant number GV-2009-021 and Fundaci\'o Caixa Castell\'o
(Bancaixa), grant P1.1B2008-09, the second-listed author
acknowledges partial financial support by the Spanish Ministry of
Science, grant MTM2008-04599/MTM; and by Fundaci\'o Caixa
Castell\'o (Bancaixa), grant P1.1B2008-26}
\subjclass{Primary 22C05, 22D35, 22A05, 43A40; Secondary 43A65, 54H11}
\keywords{compact group, maximally almost periodic group, totally bounded group,
$\kappa$-narrow group, $\kappa$-narrow uniform space, Lindel\"of number,
determined group, Tannaka-Kre\v{\i}n duality, dual topological group}
\dedicatory{Dedicated to Dikran Dikranjan}

\date{December/16/10}

\maketitle \setlength{\baselineskip}{24pt}
\setlength{\parindent}{1cm}
\begin{abstract}
The notion of locally quasi-convex abelian group, introduce by Vilenkin, is extended to
maximally almost-periodic non-necessarily abelian groups. For that purpose, we look at
certain bornologies that can be defined on the set $\hbox{rep}(G)$ of all finite dimensional continuous
representations on a topological group $G$ in order to associate well behaved group topologies
(dual topologies) to them. As a consequence, the lattice of all Hausdorff totally bounded group topologies
on a group $G$ is shown to be isomorphic to the lattice of certain special subsets of $\hbox{rep}(G_d)$.
Moreover, ge\-ne\-ra\-li\-zing some ideas of Namioka, we relate the structural properties of the dual topological
groups to topological properties of the bounded subsets belonging to the associate bornology.
In like manner, certain type of bornologies that can be defined on a group $G$ allow one to
define canonically associate uniformities on the dual object $\widehat G$. As an application,
we prove that if for every dense subgroup $H$ of a compact group $G$ we have that $\widehat H$
and $\widehat G$ are uniformly isomorphic, then $G$ is metrizable. Thereby, we extend to
non-abelian groups some results previously considered for abelian topological groups.
\end{abstract}
\vspace{1cm}
\section{Introduction and motivation}

In this paper we investigate the application of Pontryagin duality methods
to non-abelian groups and our mail goal is to study general groups using
these techniques. Although there are many ways of approaching non-abelian
duality, here, we shall explore a method that is based on the duality
theory of compact groups established by Tannaka and Kre\v{\i}n (see \cite{hr:ii,enoschw}
and the references therein).

Duality methods are well known and widely used in the study of topological
abelian groups and the literature on this subject is vast. This
program has not been accomplished for non-abelian groups and there
are several reasons that explain this asymmetric development
between both theories. No doubt, the non-abelian
context is more involved since the \emph{dual object} is not longer
a topological group and it seems difficult to equip it with a
structure which be sufficiently general and tractable.
Notwithstanding this, it is known, as a consequence of the cellebrated Gel'fand and
Ra\v{i}kov Theorem, that the set of all unitary representations of
a locally compact group $G$ contains the information necessary to
recover the topological and algebraic structure of the group (see
\cite{enoschw}). In this paper we look at the duality theory of
Maximally Almost Periodic groups (MAP groups for short) and their finite
dimensional representations. Our main goal is to develop this
theory in order to apply duality methods for non-abelian groups.
A main tool here is the unitary duality introduced by Tannaka, Kre\v{\i}n,
Hochschild and Mostow for compact groups,  and extended subsequently by
different authors \cite{chu,heyer70,pogu72,pogu76a,pogu76b,roed,gal_her:advances,
hart_kunen:BohrNonabelianGroups,hernwu06,rigg}.

In this line, the notion of locally quasi-convex abelian group, introduce by Vilenkin, is extended to
maximally almost-periodic non-necessarily abelian groups. For that purpose, we look at
certain bornologies that can be defined on the set $\hbox{rep}(G)$ of all finite dimensional continuous
representations on a topological group $G$ in order to associate well behaved group topologies
(dual topologies) to them. As a consequence, the lattice of all Hausdorff totally bounded group topologies
on a group $G$ is shown to be isomorphic to the lattice of certain special subsets of $\hbox{rep}(G_d)$.
Moreover, genera\-li\-zing some ideas of Namioka, we relate the structural properties of the dual topological
groups to topological properties of the bounded subsets belonging to the associate bornology.
In like manner, certain type of bornologies that can be defined on a group $G$ allow one to
define canonically associate uniformities on the dual object $\widehat G$. As an application,
we prove that if for every dense subgroup $H$ of a compact group $G$ we have that $\widehat H$
and $\widehat G$ are uniformly isomorphic, then $G$ is metrizable. Thereby, we extend to
non-abelian groups some results previously considered for abelian topological groups.

\section{Basic definitions and terminology}
All (topological) groups are assumed to be (Hausdorff
and) maxi\-mally almost periodic (MAP, for short); that is, groups that can be
continuously injected into compact groups. For any group $G$, let $e_G$ denote the neutral element
of the group.% and $G\hspace{0.5mm} '$ its the commutator subgroup.

Let $\mathbb{U}(n)$ denote the
{\em unitary group of order $n$}, namely, the group of all
complex-valued $n\times n$
matrices $A$ for which $A^{-1}=A^{*}$.
Then $\mathbb{U}(n)$ is a compact Lie group and can be
realized as the group of isometries of $\C^n$. We endow
$\mathbb{U}(n)$ with the metric coming from the operator
norm. More specifically
$$d(\varphi,\psi)=\|\varphi-\psi\|=\|\varphi \psi^{-1} -I_n\|$$
Set $\mathbb{U}=\sqcup _{n<\omega }\mathbb{U}(n)$
(topological/uniform sum).

A {\em unitary representation} $\varphi$ of the (to\-po\-lo\-gi\-cal) group $G$
is a (continuous) homomorphism into the group of all linear
isometries of a complex Hilbert
space $\mathcal{H}$.
When dim~$\mathcal{H}<\infty$, we say that $\varphi$ is a {\it finite
dimensional representation}; in this case, $\varphi$ is a homomorphism
into one of the groups $\mathbb{U}(n)$.

A linear subspace $E\subseteq \C^n$ is an \emph{invariant} subspace
for $\mathcal S\subseteq \mathbb{U}(n)$ if $ME\subseteq E$ for all
$M\in \mathcal S$.
If there is a subspace $E$ with $\{0\}\subsetneq E\subsetneq \mathbb C^n$
which is invariant for $\mathcal S$, then $\mathcal S$ is called
\emph{reducible}; otherwise $\mathcal S$ is \emph{irreducible}.
An \emph{irreducible representation} of $G$ is a %$\varphi\in CHom(G, \mathbb U(n))$
unitary representation $\varphi$ such that $\hbox{ran}(\varphi)$ is irreducible.

If $G$ is a compact group, the Peter-Weyl Theorem (see \cite{hof_mor:compact_groups}) implies that the finite
dimensional irreducible representations of $G$ yield a complete analysis of $G$.
The set of all $\mbox{$n$-dimensional}$
continuous unitary representations on $G$  will be denoted by
$\repng$ and the symbol $\repg=\bigcup\limits_{n<\omega}\ \repng$ denotes the set of all finite dimensional
continuous unitary representations.
In like manner, the set of all irreducible $n$-di\-men\-sional
continuous unitary representations on $G$  will be denoted by
$\irn$ and $\ir =\bigcup\limits_{n<\omega}\ \irn$.

Two $n$-dimensional representations $\varphi$ and $\psi$
are {\it equivalent} ($\varphi \sim \psi$)  when
there is $M$ in $\mathbb U(n)$ such that $\varphi(x)=M^{-1}\psi(x)M$ for all
$x\in G$. This defines an equivalence relation on the set of all $n$-dimensional
representations of the group. The
quotient set $\widehat{G}_n=\frac{\irn}{\sim}$  consists
of the equivalence classes of irreducible representations of
dimension $n$.
Denote by $\widehat G=\bigcup\limits_{n<\omega} \widehat{G}_n$ the collection of all equivalence
classes $[\varphi]$ of irreducible representations $\varphi$ of $G$.
One calls $\widehat G$ the \emph{dual object} of $G$.
Following von Neumannn \cite{vneumann}, in order to deal with $\widehat G$,
one must make an arbitrary
choice of an irreducible representation in each equivalence class.

When be convenient, the symbol $G_\tau$ will
denote the group $G$ equipped with a group topology $\tau$ and,
more generally, the notions related to $G_\tau$ must be understood
in this context.
For example, the set of all
\emph{continuous} $n$-di\-men\-sio\-nal unitary representations
on $G_\tau$ will be denoted by $\hbox{rep}_n(G_\tau)$ and so on. The symbol
$G_d$ will denote the algebraic group $G$ equipped with the discrete topology.

We now collect some well-known facts about uniform spaces. The basic definitions and terminology are
taken from \cite{roelcke_dierolf}.

Let $X$ be a set and $\Delta= \Delta(X)\subseteq X\times X$ the
diagonal on $X$. For $B,C\subseteq X\times X$, $S\subseteq X$ we
define $B\circ C= \{(x,z)\in X\times X : (x,y)\in B \ \hbox{and} \
(y,z)\in C \ \hbox{for some} \ y\in X \}$, $B^{-1}= \{(x,y) :
(y,x)\in B \}$, $B[S]=\{y\in X : (x,y)\in B \ \hbox{for some} \
x\in S \}$. A {\it uniformity} on $X$ is a set $\mu$ of subsets of
$X\times X$ satisfying the following conditions: (i) $B\supseteq
\Delta$ for all $B\in \mu$; (ii) if $B\in \mu$, then $B^{-1}\in
\mu$; (iii) if $B\in \mu$, there exists $C\in \mu$ such that
$C\circ C\subseteq B$; (iv) the intersection of two members of
$\mu$ also belongs to $\mu$; (v) any subset of $X\times X$, which
contains a member of $\mu$, itself belongs to $\mu$; (vi) $\Delta=\cap\{B: B\in\mu\}$.
If the latter condition is not satisfied we say that $\mu$ is a \emph{pseudo-uniformity} on $X$.

The members of $\mu$ are called {\it bands} (of the uniformity).
By a {\it base} for a uniformity $\mu$ is meant a subset $\mathfrak
B$ of $\mu$ such that a subset of $X\times X$ belongs to $\mu$ if
and only if it contains a set belonging to $\mathfrak B$. A {\it
uniform space} $\mu X$ is a pair comprising a set $X$ and a
uniformity $\mu$ on $X$. If $\mu X$ is a uniform space, one may
define a topology $\tau$ on $X$ by assigning to each point $x$ of
$X$ the neighborhood base comprised of the sets $B[x]$, $B$
ranging over the uniformity. The
topological space associated to $\mu X$ will be denoted by $X_{\tau\mu}$.
Given two covers $\mathcal U$ and $\mathcal V$ of $X$ we say that
$\mathcal U$ \emph{refines} $\mathcal V$ if for all $V\in \mathcal
V$ there is $U\in \mathcal U$ such that $U\subseteq V$. A cover
$\mathcal{U}$ of $X$ is named \emph{$\mu$-uniform} if
there exists $B\in\mu$ such that the cover $\{B[x]\,:\,x\in X\}$
refines $\mathcal{U}$.

A topological group $G$ is said to be \emph{totally bounded} when
for every neighborhood $U$ of the neutral element in $G$, there is
$F\subseteq G$ with $|F|< \aleph_0$ such that
$FU=G$. This notion has also meaning for arbitrary cardinal numbers in
the following manner:
let $\kappa$ be a cardinal
number, following the terminology of Arhangel'skii and Tkachenko (see \cite{arh_tka:book}),
we say that $G$ is \emph{$\kappa$-narrow} when for every neighborhood $U$
of the neutral element in $G$, there is $F\subseteq G$ with $|F|\leq \kappa$ such that
$FU=G$. These definitions extend to uniform spaces in a natural way.
\begin{Definition}
\rm{ Let $\mu X$ be a uniform space,
we say that $\mu X$ is \emph{totally bounded} (resp. \emph{$\kappa$-narrow})
when every $\mu$-uniform cover of $X$ has a subcover of finite cardinality (resp.
of cardinality less or equal than $\kappa$). The \emph{Lindel\"of number} of a
uniform space $\mu X$ is defined
as the less cardinal $\kappa$ such that $\mu X$ is $\kappa$-narrow.}
\end{Definition}

It is known that a uniform space $\mu X$ is $\kappa$-narrow if and only
if it contains no uniformly discrete subspace of power $\kappa$
(see \cite[p. 24]{isbell:book}). From this fact, it follows:
\begin{Proposition}
Every (uniform) subspace of a $\kappa$-narrow uniform space
is $\kappa$-narrow.
\end{Proposition}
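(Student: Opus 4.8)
The plan is to reduce the statement to the characterization of $\kappa$-narrowness quoted just above, namely that a uniform space $\mu X$ is $\kappa$-narrow if and only if it contains no uniformly discrete subspace of power $\kappa$. Thus, rather than argue directly with $\mu$-uniform covers and their subcovers, I would argue by contraposition at the level of uniformly discrete subsets, where the comparison between a space and its subspaces is most transparent.

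First I would fix a uniform subspace $Y \subseteq X$, equipped with the induced (subspace) uniformity $\mu_Y = \{\, B \cap (Y \times Y) : B \in \mu \,\}$. The decisive observation is that uniform discreteness of a subset $D \subseteq Y$ is insensitive to whether it is measured inside $Y$ or inside $X$. Indeed, $D$ is uniformly discrete in $\mu_Y$ precisely when there is a band $B \in \mu$ with $(B \cap (Y \times Y))[d] \cap D = \{d\}$ for every $d \in D$; since $D \subseteq Y$, one has $(B \cap (Y \times Y))[d] \cap D = B[d] \cap Y \cap D = B[d] \cap D$, so the very same band $B$ witnesses that $D$ is uniformly discrete in $\mu X$. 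Hence every uniformly discrete subset of the subspace $Y$ is a uniformly discrete subset of the ambient space $X$, of the same cardinality.

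Now suppose $\mu X$ is $\kappa$-narrow. By the quoted characterization, $\mu X$ contains no uniformly discrete subspace of power $\kappa$. If $Y$ failed to be $\kappa$-narrow, the same characterization applied to $\mu_Y$ would produce a uniformly discrete subset $D \subseteq Y$ with $|D| = \kappa$; but by the previous step $D$ would then be a uniformly discrete subset of $\mu X$ of power $\kappa$, a contradiction. Therefore $Y$ is $\kappa$-narrow, as desired.

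I do not expect a genuine obstacle here: the entire content lies in the elementary verification that the trace of a band on $Y \times Y$ induces the same separation on subsets of $Y$ as the original band, so that ``uniformly discrete of power $\kappa$'' is an \emph{absolute} notion for subsets of $X$. The only point deserving care is the bookkeeping with the identity $B[d] \cap Y \cap D = B[d] \cap D$, which relies precisely on the inclusion $D \subseteq Y$; everything else is a direct transcription of the cited result from \cite{isbell:book}.
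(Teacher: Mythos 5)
Your proof is correct and takes exactly the route the paper intends: the paper gives no explicit argument at all, merely citing the characterization that $\mu X$ is $\kappa$-narrow if and only if it contains no uniformly discrete subspace of power $\kappa$ and asserting that the proposition ``follows from this fact.'' Your verification that uniform discreteness of a subset $D\subseteq Y$ is the same notion whether measured in the subspace uniformity $\mu_Y$ or in the ambient uniformity $\mu$ is precisely the bookkeeping the paper leaves implicit.
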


In this paper, a \emph{bornology} $\mathcal B$ on a space $X$
designates an ideal of subsets of $X$ that contains all finite subsets
of $X$ (denoted $Fin(X)$). That
is to say, $\mathcal B$ satisfies the following properties:

\begin{itemize}
\item[(i)] $A, B\in \mathcal B$ implies $A\cup B\in \mathcal B$;

\item[(ii)] $A\in \mathcal B$ and $B\subseteq A$ implies $B\in
\mathcal B$;

\item[(iii)] $Fin(X)\subseteq\mathcal B$.
\end{itemize}

Let $X$ be a set and let $M$ be a metrizable space. We
denote by $M^X$ (resp. $l^\infty(X,M)$) the set of all (resp. \emph{bounded})
functions from $X$ to $M$. Suppose that $\mathcal B$ is a bornology
on $X$ and $F\subseteq M^X$ is pointwise bounded. Then we may equip
$F$  with the
uniformity $\mu_{\mathcal B}$, which has as a uniformity base  the bands
$$B(K,\epsilon)=\{(f,g)\in F^2\,:\,d(f(x),g(x))<\epsilon,
\,\forall x\in K\}$$ where $\epsilon>0$, $K\in \mathcal B$
and $d$ is the metric defined on $M$.

If $F\subseteq l^\infty(X,M)$, there also exists a metric uniformity
$\mu_{\infty}$ that can be
defined canonically on $F$, which is generated by the bands
$\{ B(X,\epsilon): \epsilon > 0 \}$. The metric associated to
$\mu_{\infty}$ is defined for all $(f,g)\in F^2$ by
$$d_{\infty}(f,g):=\sup\limits_{x\in X}d(f(x),g(x)).$$

%Notice that if $F\subseteq l^\infty(X,M)$ then we may also look at $X$
%as subset of $l^\infty(F,M)$.
\noindent We assume that $l^\infty(X,M)$ is equipped with this metric in principle.

From here on, we shall make use of the \emph{evaluation mapping} $\mathfrak e_F$
that is associated to every $F\subseteq M^X$ as
follows:
$$\begin{array}{rcl}\mathfrak e_F:X&\hookrightarrow & M^F\\
x & \mapsto & \mathfrak e_F(x)(f)=f(x)\end{array}$$

Furthermore, if $F$ is pointwise bounded, it follows that
$\mathfrak e_F(X)\subseteq l^\infty(F,M)$. Therefore,
we may equip $X$ with the pseudo-metric
$d_F$ inherited from $l^\infty(F,M)$, which is defined by
$$d_F(x,y)=\sup\limits_{f\in
F}d(f(x),f(y))\ \forall (x,y)\in X^2.$$

The pseudo-uniformity associated to $d_F$ is denoted by $\mu_F$.
The symbol $X_F$ denotes the space $X$ equipped with the pseudo-metric
$d_F$. We also identify $X_F$ to $X_{\tau\mu_F}$ for the sake of simplicity.

Given a topological space $X$ and a metrizable space $M$, let $C(X,M)$ be the set
of all  continuous functions from $X$ to $M$. We denote
by $\mu_\mathcal K$ the uniformity on
$C(X,M)$ generated by the bornology $\mathcal B=\mathcal K(X)$
generated by all compact subsets of $X$.

\section{Variations on a theorem by Namioka}

The starting point of this section is the following generalization of a theorem by Namioka in \cite{namioka:mathematika}
about the relationship between the metrizability and separability of certain spaces. In order to
do this, we need some preliminary notational observations.

Let $X$, $F$ and $M$ be sets and let $\Phi : X\times~F \longrightarrow M$ be an arbitrary map.
Identifying the elements $f,g$ in $F$ such that $\Phi(x,f)=\Phi(x,g)$ for all $x\in X$, we assume
without loss of generality that every element $f\in F$ is univocally determined by the map
$\Phi_f\in M^X$, defined as $\Phi_f(x)=\Phi(x,f)$. Therefore, with some notational abuse, the
set $F$ may be looked at as a subset of $M^X$. With this assumption the symbol $cl_{M^{X}} F$
denotes the set $cl_{M^X} \{ \Phi_f : f\in F \}$.

The \emph{density} $\D(X)$ of a topological space $X$ is the minimal cardinality of a dense subset of $X$.
The \emph{weight} $\W(X)$ is the minimal cardinality of an open basis for the topology of $X$.
We have $\W(X)=\D(X)$ for infinite metrizable spaces $X$.
\begin{theor}\label{th_npeso}
Let $X$, $F$ be sets and let $M$ be a metrizable space.  If $\Phi : X\times~F \longrightarrow M$ is a map such that $K=cl_{M^{X}} F$ is
compact. Then the following assertions are true:
\begin{itemize}
\item[(a)] If $\D(X_F)\leq \kappa$, then $\W(K)\leq \kappa$.

\item[(b)] If $\W(M)\cdot \W(K)\leq  \kappa$, then
$\D(X_F)\leq \kappa$.
\end{itemize}
\end{theor}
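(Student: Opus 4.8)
The plan is to realise $X_F$ isometrically as a family of continuous functions on the compact space $K$, and then read off both statements from the interplay between the product topology of $K$ and the uniform (sup) metric on that function space. The starting observation is that, since $K=\cl_{M^X}F$ and the map $g\mapsto d(g(x),g(y))$ is continuous on $M^X$, one has $d_F(x,y)=\sup_{f\in F}d(f(x),f(y))=\sup_{g\in K}d(g(x),g(y))$ for all $x,y\in X$; in particular every $g\in K$ is $1$-Lipschitz as a map $(X,d_F)\to M$. Writing $\hat{x}\colon K\to M$ for the evaluation $\hat{x}(g)=g(x)$ (continuous, being a coordinate projection restricted to $K$), we get $d_F(x,y)=\sup_{g\in K}d(\hat{x}(g),\hat{y}(g))=d_\infty(\hat{x},\hat{y})$. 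Thus $x\mapsto\hat{x}$ is an isometry of the pseudo-metric space $X_F$ onto the subspace $\widehat{X}=\{\hat{x}:x\in X\}$ of $(C(K,M),d_\infty)$, and in particular $\D(X_F)=\D(\widehat{X})$.

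For (a), fix $D\subseteq X$ dense in $X_F$ with $|D|\le\kappa$ and consider the restriction map $E\colon K\to M^D$, $E(g)=(g(x))_{x\in D}$, which is continuous. I would first check that $E$ is injective: if $g,h\in K$ agree on $D$, then for arbitrary $y\in X$ pick $x_n\in D$ with $d_F(x_n,y)\to 0$ and use the Lipschitz bound $d(g(x_n),g(y))\le d_F(x_n,y)$ (and the same for $h$) to conclude $g(y)=\lim g(x_n)=\lim h(x_n)=h(y)$. Since $K$ is compact and $M^D$ Hausdorff, $E$ is a homeomorphism onto $E(K)$. Finally, each $\hat{x}(K)$ is a compact, hence separable, subspace of the metric space $M$, so $N=\bigcup_{x\in D}\hat{x}(K)$ satisfies $\W(N)=\D(N)\le\kappa$; as $E(K)\subseteq N^D$, we obtain $\W(K)=\W(E(K))\le\W(N^D)\le|D|\cdot\W(N)\le\kappa$.

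For (b) it suffices, by the previous isometric identification, to show $\D(C(K,M),d_\infty)\le\kappa$ where $\kappa=\W(M)\cdot\W(K)$. I would use the standard metric fact that $\D(Y)\le\kappa$ iff every $\epsilon$-separated subset of $Y$ has cardinality $\le\kappa$ for each $\epsilon>0$ (maximal $\tfrac1n$-separated sets assemble into a dense set). So assume toward a contradiction an $\epsilon$-separated family $\{f_t:t\in T\}\subseteq C(K,M)$ with $|T|>\kappa$. Fix a base $\mathcal B$ of $K$ with $|\mathcal B|\le\W(K)$ and a family $\{W_j\}$ of open balls of radius $\epsilon/4$ whose centres are dense in $M$, so $|\{W_j\}|\le\W(M)$. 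For each $t$, uniform continuity of $f_t$ (automatic on the compact $K$) together with compactness yields finitely many pairs $(B_i,W_{j_i})\in\mathcal B\times\{W_j\}$ with the $B_i$ covering $K$ and $f_t(B_i)\subseteq W_{j_i}$; encode $f_t$ by this finite set $\sigma_t$. The number of such finite subsets of $\mathcal B\times\{W_j\}$ is at most $\kappa$, so two indices $s\neq t$ receive the same code $\sigma_s=\sigma_t$; then for every $g\in K$ there is $i$ with $g\in B_i$, whence $f_s(g),f_t(g)\in W_{j_i}$ and $d(f_s(g),f_t(g))<\epsilon/2$. This forces $d_\infty(f_s,f_t)\le\epsilon/2<\epsilon$, the desired contradiction.

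I expect the genuine obstacle to be part (b) for a \emph{general} metric target $M$: there is no linear or convex structure on $M$, so the usual partition-of-unity construction of uniform approximants is unavailable. The device that circumvents this is precisely the finite-cover/pigeonhole encoding above, which bounds the size of $\epsilon$-separated families directly from the compactness of $K$ and the weights of $K$ and $M$, without ever having to build approximating functions by hand. The remaining points—that $\sup$ over $F$ equals $\sup$ over $K$, that $E$ is an embedding, and the separated-set characterisation of density—are routine once the isometric picture $X_F\cong\widehat X\subseteq(C(K,M),d_\infty)$ is in place.
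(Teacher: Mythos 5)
Your proof is correct (modulo the paper's standing convention that the cardinals in play are infinite, so that $\W=\D$ for metrizable spaces, $\kappa\cdot\aleph_0=\kappa$, and a set of size $\kappa$ has only $\kappa$ finite subsets). For part (a) you follow essentially the same route as the paper's sketch: your $1$-Lipschitz bound $d(g(x),g(y))\le d_F(x,y)$ for all $g\in K$ is exactly the equicontinuity of $K$ on $X_F$ invoked there; injectivity of the restriction map $E$ on a dense set $D$ is the paper's claim that a dense subset of $X_F$ separates the points of $K$; and compactness of $K$ upgrades the injection into $M^D$ to an embedding. You do, however, supply one detail the sketch glosses over and which is genuinely needed: the naive count only gives $\W(K)\le \W(M^D)\le \kappa\cdot\W(M)$, whereas the theorem asserts $\W(K)\le\kappa$ with no $\W(M)$ factor, and it is precisely your observation that each point-image $\hat x(K)$ is compact metrizable (hence separable) that lets you place $E(K)$ inside $N^D$ with $\W(N)\le\kappa$ and obtain the stated bound. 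For part (b) the paper offers no argument at all ("the proof of (b) is clear"), so here your text is genuine added content rather than a variant: the isometric identification of $X_F$ with $\widehat X\subseteq (C(K,M),d_\infty)$, the reduction of density to bounding $\epsilon$-separated families, and the finite-cover/pigeonhole encoding of each $f_t$ by a finite subset of $\mathcal B\times\{W_j\}$ together constitute a complete and correct proof that $\D\bigl(C(K,M),d_\infty\bigr)\le \W(K)\cdot\W(M)$, valid for an arbitrary metrizable target $M$. As you point out, this is the step where a convexity or partition-of-unity approximation (available when $M$ is a normed space) is unavailable, and your combinatorial argument circumvents that cleanly; it is in effect a self-contained proof of the classical bound on the density of $C(K,M)$ in the uniform metric, which is what the authors presumably had in mind when declaring (b) clear.
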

\begin{proof}[(Sketch)] (a) Because $X_F$ is equipped with the topology of uniform convergence,
it follows that $F$ is equicontinuous. Therefore also $K$ is equicontinuous on $X$. Thus any dense
subset $A$ of $X_F$ with $|A|\leq \kappa$ separates the points of $K$. The proof of (b) is clear.
\end{proof}

Next we present several consequences that follow from the result above. In the rest of this section,
$F$ denotes a pointwise bounded subset of $M^X$.

\begin{cor}
Let $\mu X$ be a $\kappa$-narrow uniform space and let $M$ be a metrizable space.
If  $F$ is a uniformly equicontinuous and relatively
pointwise compact subset of\ $M^X$,  then
$\W (F)\leq \kappa$.
\end{cor}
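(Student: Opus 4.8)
The plan is to derive the corollary directly from Theorem~\ref{th_npeso}(a). That theorem takes two inputs: compactness of $K=cl_{M^{X}}F$, and the density bound $\D(X_F)\leq\kappa$. The first is handed to us for free, since $M^X$ carries the pointwise (product) topology and ``relatively pointwise compact'' means precisely that $K=cl_{M^{X}}F$ is compact. So the entire substance of the proof is the verification of $\D(X_F)\leq\kappa$. Once that is established, Theorem~\ref{th_npeso}(a) gives $\W(K)\leq\kappa$; and since $F$ sits inside $K$ as a topological subspace (weight being monotone under passage to subspaces), we conclude $\W(F)\leq\W(K)\leq\kappa$, as required.

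To obtain $\D(X_F)\leq\kappa$ I would use uniform equicontinuity as the bridge that transfers the $\kappa$-narrowness of $\mu X$ to the pseudo-metric $d_F$. Fix $n<\omega$. By uniform equicontinuity of $F$ there is a band $B_n\in\mu$ such that $(x,y)\in B_n$ implies $d(f(x),f(y))<1/n$ for every $f\in F$, hence $d_F(x,y)\leq 1/n$. The cover $\{B_n[x]:x\in X\}$ is $\mu$-uniform, being witnessed by the band $B_n$ itself, so the $\kappa$-narrowness of $\mu X$ supplies a subfamily indexed by some $A_n\subseteq X$ with $|A_n|\leq\kappa$ and $\bigcup_{a\in A_n}B_n[a]=X$. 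By the choice of $B_n$, every $x\in X$ lies in some $B_n[a]$, i.e. $(a,x)\in B_n$, whence $d_F(a,x)\leq 1/n$; thus $A_n$ is $(1/n)$-dense in $X_F$.

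Setting $A=\bigcup_{n<\omega}A_n$ produces a set of cardinality at most $\kappa$ (here $\kappa$ is infinite, so $\kappa\cdot\aleph_0=\kappa$) which is dense in the pseudo-metric space $X_F$: given $x\in X$ and $\delta>0$, choosing $n$ with $1/n<\delta$ yields $a\in A_n\subseteq A$ with $d_F(a,x)\leq 1/n<\delta$. Hence $\D(X_F)\leq\kappa$, and the argument closes as described in the first paragraph.

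I expect the one genuinely substantive step to be the middle paragraph, where uniform equicontinuity is what converts a $\mu$-uniform cover into a $d_F$-approximating net, so that narrowness of the ambient uniform space directly controls the density of $X_F$. Everything else --- compactness of $K$, the appeal to Theorem~\ref{th_npeso}(a), and monotonicity of weight under subspaces --- is routine. As an alternative to the cover argument, one could instead reason through uniformly discrete sets: any $(1/n)$-separated subset of $X_F$ is $B_n$-uniformly discrete in $\mu X$ and hence of power at most $\kappa$ by the characterization of $\kappa$-narrowness recalled above, so a maximal such set is $(1/n)$-dense and has size $\leq\kappa$, giving $\D(X_F)\leq\kappa$ once more.
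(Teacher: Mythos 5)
Your proposal is correct and follows essentially the same route as the paper: both reduce the corollary to the bound $\D(X_F)\leq\kappa$, obtained by using uniform equicontinuity to transfer the $\kappa$-narrowness of $\mu X$ to the pseudo-metric space $X_F$, and then invoke Theorem~\ref{th_npeso}(a) with $\Phi(x,f)=f(x)$. The only difference is cosmetic: the paper phrases the transfer as ``$\mathfrak e_F$ is uniformly continuous, so $\mu$ is finer than $\mu_F$, and a $\kappa$-narrow pseudo-metrizable space has density at most $\kappa$,'' whereas you unfold that last fact into an explicit construction of a dense set from $(1/n)$-scale subcovers.
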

\begin{proof}
Observe that, if $F$ is uniformly equicontinuous, then
$\mathfrak e_F : \mu X\longrightarrow l^\infty(F,M)$ is uniformly
continuous (indeed, for every $\epsilon >0$ there exists
$B_{\epsilon}\in\mu$ such that if $(x,y)\in B_{\epsilon}$ then
$d(f(x),f(y))<\epsilon$ $\forall f\in F$, that is,
$(\mathfrak e_F(x),\mathfrak e_F(y))\in B(X,\epsilon)$). Therefore the
uniformity $\mu$ is finer than $\mu_F$, which implies that $\mu_F
X $ is $\kappa$-narrow. Since $\mu_F X$ is
pseudo-metrizable, it follows that $\D(X_F)\leq \kappa$.
It will suffice now to apply Theorem \ref{th_npeso} to the map
$\Phi(x,f)=f(x)$ in order to finish the proof.
\end{proof}

%\begin{cor}
%If $P\subseteq l^\infty(X,M)$ is metrizable and pointwise relatively compact and $\W(P)\cdot \W(M)\leq \kappa$,
%then $\tau \mu_P X$ is $\kappa$-narrow.
%\end{cor}

Assume further that $F$ is equipped with some bornology $\mathcal B$ and
$X$ is equipped with the uniformity of uniform convergence on the members of $\mathcal B$.
Firstly, we need a technical lemma that we think it is probably known. Nevertheless,
we include a proof of it here because it is essential in the rest of the paper.
\begin{Proposition}\label{pro_pro}
The product of an arbitrary family of $\kappa$-narrow
uniform spaces is a $\kappa$-narrow
uniform space.
\end{Proposition}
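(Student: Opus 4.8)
The plan is to establish the result by reducing to the characterization of $\kappa$-narrowness via uniformly discrete subspaces that was recalled just before Proposition 2.? in the excerpt. Recall that a uniform space is $\kappa$-narrow if and only if it contains no uniformly discrete subspace of cardinality $\kappa$ (cited from Isbell). So the goal becomes: show that a product $\mu X = \prod_{i\in I}\mu_i X_i$ of $\kappa$-narrow factors contains no uniformly discrete subset of size $\kappa$. Equivalently, I would argue that every subset $S\subseteq\prod_i X_i$ with $|S|=\kappa$ fails to be uniformly discrete.

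**First I would** fix the structure of the product uniformity. A basic band in the product is determined by a finite set $J\subseteq I$ of coordinates and bands $B_j\in\mu_j$ for $j\in J$, via $\widetilde B=\{(x,y): (x_j,y_j)\in B_j\ \forall j\in J\}$. A subset $S$ is uniformly discrete precisely when there is a single basic band $\widetilde B$ such that distinct points of $S$ are never $\widetilde B$-close; that is, for any two distinct $s,t\in S$ there is some $j\in J$ with $(s_j,t_j)\notin B_j$. So uniform discreteness of $S$ in the product is witnessed by a \emph{finite} set of coordinates $J$ and bands $B_j$.

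**The key step** is then a counting/pigeonhole argument on this finite witness set. Suppose for contradiction that $S\subseteq\prod_i X_i$ is uniformly discrete with $|S|=\kappa$, witnessed by $(J,\{B_j\}_{j\in J})$. For each coordinate $j\in J$, consider the projection $\pi_j(S)\subseteq X_j$. Since $\mu_j X_j$ is $\kappa$-narrow, the image $\pi_j(S)$ (being a subspace) is also $\kappa$-narrow by the Proposition on subspaces stated earlier; hence $\pi_j(S)$ contains no $B_j$-discrete subset of size $\kappa$. I want to combine the finitely many coordinates in $J$ to contradict $|S|=\kappa$. The clean way is to define an equivalence-type relation on $S$: say $s\approx t$ when $(s_j,t_j)\in B_j$ for every $j\in J$. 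Uniform discreteness of $S$ means no two distinct points of $S$ are $\approx$-equivalent, so $\approx$ restricted to $S$ is trivial; but I can also view $S$ as embedded via $s\mapsto(\pi_j(s))_{j\in J}$ into the finite product $\prod_{j\in J}X_j$, where it would have to be uniformly discrete for the finite product uniformity generated by the $B_j$. Thus it suffices to prove the result for \emph{finite} products, and then the infinite case follows because any witness for uniform discreteness uses only finitely many coordinates.

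**The hard part will be** the finite-product step, i.e.\ that a product of two (hence finitely many, by induction) $\kappa$-narrow spaces is $\kappa$-narrow, and here a genuine cardinal argument is needed rather than pure bookkeeping. For two factors, take $S\subseteq X_1\times X_2$ of size $\kappa$ that is uniformly discrete with witness $(B_1,B_2)$. I would like to say: project to the first coordinate; if the fiber structure is controlled, one factor must already carry a $B$-discrete set of size $\kappa$. The subtlety is that $\kappa$ may be singular, so a naive ``$\kappa = \kappa\cdot\kappa$'' split can fail, and one must be careful about whether ``$\kappa$-narrow'' forbids discrete sets of size exactly $\kappa$ or of size $\geq\kappa$. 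I expect the proof to handle this by taking $S$ of cardinality $\kappa$, noting $|\pi_1(S)|\le\kappa$, and distinguishing cases: if $|\pi_1(S)|=\kappa$ then $\pi_1(S)$ is a $B_1$-uniformly discrete subset of $X_1$ of size $\kappa$ (discreteness in the first coordinate is inherited because any two points with equal first coordinate would need to differ in the second, but then $\pi_1$ would be non-injective and one inspects a single fiber), contradicting $\kappa$-narrowness of $X_1$; otherwise some fiber $\pi_1^{-1}(a)\cap S$ has cardinality $\kappa$ and its second coordinates form a $B_2$-discrete set of size $\kappa$ in $X_2$, the contradiction this time coming from the second factor. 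Managing the fiber-size dichotomy correctly (the point where regularity/singularity of $\kappa$ enters) is the one place requiring care; everything else is the uniform-space translation outlined above.
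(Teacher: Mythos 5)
Your reduction of the infinite product to the two-factor case is sound: a uniformly discrete subset of the product is discrete with respect to some band, hence with respect to a smaller \emph{basic} band, and a basic band constrains only finitely many coordinates; induction then reduces everything to two factors. The genuine gap is in the two-factor step itself. Uniform discreteness of $S\subseteq X_1\times X_2$ with witness $(B_1,B_2)$ says only that any two distinct points of $S$ are separated in \emph{at least one} coordinate; it carries no information about which one. Your first branch rests on the implication: if $\pi_1$ is injective on $S$ (which is all your parenthetical actually argues), then $\pi_1(S)$ is $B_1$-uniformly discrete. That implication is false, and your justification of it nowhere uses $\kappa$-narrowness of the factors, so nothing rescues it as stated. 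For instance, in $\mathbb{R}\times\mathbb{R}$ with the metric uniformities and $B_1=B_2=\{(x,y):|x-y|<1\}$, the set $S=\{(i+\varepsilon_j,\,j): i,j\in \mathbb{Z}\}$, where the $\varepsilon_j\in[0,1/2)$ are distinct and dense in $[0,1/2)$, is uniformly discrete (points with $j\neq j'$ are separated in the second coordinate, points with $j=j'$ in the first) and $\pi_1$ is injective on $S$, yet $\pi_1(S)=\mathbb{Z}+\{\varepsilon_j: j\in\mathbb{Z}\}$ contains pairs of distinct points arbitrarily close together, so it is not uniformly discrete for \emph{any} band. Your second branch (a fiber of size $\kappa$) is fine, but the dichotomy ``either $|\pi_1(S)|=\kappa$ or some fiber meets $S$ in a set of size $\kappa$'' needs $\kappa$ regular, as you note --- and this is not something extra care can settle within your scheme: recording, for each pair of points of $S$, which coordinate separates them is a $2$-coloring of pairs, and extracting a subset of size $\kappa$ separated in one fixed coordinate is exactly a partition relation of the form $\kappa\to(\kappa,\kappa)^2$, which fails in general (Sierpi\'nski, already for $\kappa=\omega_1$). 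So no pigeonhole on projections and fibers can close the argument.

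The discrete-set route can be completed, but it needs a different idea. First, read Isbell's characterization in the form: $\mu X$ is $\kappa$-narrow iff every uniformly discrete subspace has cardinality $\le\kappa$ (this is the usable form; taken literally, ``no uniformly discrete subspace of power $\kappa$'' is contradicted by $\mathbb{Z}$ inside the $\omega$-narrow space $\mathbb{R}$). Now pick a symmetric band $C_1$ with $C_1\circ C_1\subseteq B_1$ and let $D$ be a maximal $C_1$-discrete subset of $\pi_1(S)$; then $|D|\le\kappa$ because $D$ is uniformly discrete in $X_1$, and by maximality every $s\in S$ admits $d(s)\in D$ with $(d(s),s_1)\in C_1$. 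If $d(s)=d(t)$ for distinct $s,t\in S$, then $(s_1,t_1)\in C_1\circ C_1\subseteq B_1$, so discreteness of $S$ forces $(s_2,t_2)\notin B_2$; hence each class $\{s\in S: d(s)=d\}$ projects injectively onto a $B_2$-discrete subset of $X_2$ and so has size $\le\kappa$. Therefore $|S|\le\kappa\cdot\kappa=\kappa$. This ``maximal discrete set plus $C\circ C\subseteq B$'' step is precisely what your pigeonhole is missing. For comparison, the paper avoids discrete sets altogether: it refines a uniform cover of $X\times Y$ by a product $\mathcal{U}_1\times\mathcal{U}_2$ of uniform covers of the factors, takes a subcover of cardinality $\le\kappa$ in each factor, and concludes with the same cardinal identity $\kappa\cdot\kappa=\kappa$; that argument is shorter, but your characterization-based route, once repaired as above, is a legitimate alternative.
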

\begin{proof}
Since a product has the weak uniformity generated by projections,
it will suffice to prove that the product of two $\kappa$-narrow uniform spaces is $\kappa$-narrow.
Now, let $\mu X$ and $\nu Y$ be $\kappa$-narrow. If $\mathcal U$ is a $(\mu\times \nu)$-uniform
cover of $X\times Y$, then $\mathcal U$ must have a refinement of the form
$\mathcal U_1\times \mathcal U_2= \{U_1\times U_2 : U_i\in \mathcal U_i,
1\leq i\leq 2 \}$, where $\mathcal U_1$ is $\mu$-uniform and $\mathcal U_2$ is
$\nu$-uniform. By hypothesis there is a subcover $\mathcal V_i$
of $\mathcal U_i$ such that $|\mathcal V_i|\leq\kappa$, $1\leq i\leq 2$.
Then $\mathcal V_1\times \mathcal V_2$ is a $(\mu\times \nu)$-uniform refinement
of $\mathcal U_1\times \mathcal U_2$  and
has cardinality less or equal than $\kappa$. This completes the proof.
\end{proof}
\begin{Theorem}\label{th_alfa_peso}
Let $X$ be a set, let $M$ be a metrizable space, and let $Y$ be a subset of $M^X$
that is equipped with some bornology $\mathcal B$ consisting of pointwise relatively compact sets.
If $\mu_\mathcal B$ is the uniformity on $X$ defined by $\mu_\mathcal B=\sup\{\mu_F\,:\,F\in
\mathcal{B}\}$ and furthermore $\W(M)\leq \kappa$, then $\mu_\mathcal B X$ is $\kappa$-narrow
if and only if $\W(F)\leq \kappa$ $\forall F\in\mathcal{B}$.
\begin{proof}
\noindent ($\Rightarrow$) If $\mu_\mathcal B X$ is $\kappa$-narrow then every $\mu_F
X$ is also $\kappa$-narrow and, since $\mu_F X$ is pseudo-metrizable,
it follows that $\D(X_F)\leq \kappa$. By Theorem \ref{th_npeso}, we obtain $\W(F)\leq\kappa$
for all $F\in \mathcal B$.

\noindent ($\Leftarrow$) Since $\W(F)\leq \kappa$ for all $F\in \mathcal B$, we deduce by
    Theorem \ref{th_npeso} again that $\D(X_F)\leq \kappa$ for all $F\in \mathcal B$.
    Thus $\mu_F X$ is $\kappa$-narrow for all $F\in \mathcal B$. Since
$\mu_\mathcal B X$ is the diagonal (uniform) subspace of $\prod\limits_{F\in \mathcal B} \mu_FX$,
it follows by the preceding proposition  that $\mu_\mathcal B X$ is $\kappa$-narrow too.
\end{proof}
\end{Theorem}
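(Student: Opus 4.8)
The plan is to decouple the global condition into one condition per member $F\in\mathcal B$ and then recombine by means of Proposition~\ref{pro_pro}. The guiding remark is that, since $\mu_{\mathcal B}=\sup\{\mu_F:F\in\mathcal B\}$, each $\mu_F$ is coarser than $\mu_{\mathcal B}$ and $\mu_{\mathcal B}X$ sits as the diagonal uniform subspace of the product $\prod_{F\in\mathcal B}\mu_F X$. Two transfer principles then carry $\kappa$-narrowness in both directions. On the one hand, because $\mu_F\subseteq\mu_{\mathcal B}$, every $\mu_F$-uniform cover of $X$ is a fortiori $\mu_{\mathcal B}$-uniform, so if $\mu_{\mathcal B}X$ is $\kappa$-narrow then so is each $\mu_F X$. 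On the other hand, by Proposition~\ref{pro_pro} the product $\prod_F\mu_F X$ is $\kappa$-narrow as soon as every factor is, and $\kappa$-narrowness passes to the diagonal subspace by the hereditary property of $\kappa$-narrow uniform spaces recorded in Section~2. Hence the problem reduces to a single-member statement: for each $F\in\mathcal B$, decide when $\mu_F X$ is $\kappa$-narrow.

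To treat one $F$ I would use that $\mu_F$ is generated by the pseudometric $d_F$, so that $\mu_F X$ is pseudometrizable; for a pseudometric uniform space, $\kappa$-narrowness is exactly the condition $\D(X_F)\le\kappa$ (a $\kappa$-sized $1/n$-net for each $n$ assembles into a dense set of size $\le\kappa$, and conversely such a dense set yields $\le\kappa$-subcovers of every uniform cover). It remains to compare $\D(X_F)$ with the weight appearing in the hypothesis, and here Theorem~\ref{th_npeso} is the decisive tool, applied to the evaluation map $\Phi(x,f)=f(x)$ whose attached compact set is $K=cl_{M^X}F$ (compact because the members of $\mathcal B$ are pointwise relatively compact). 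Its part~(a) yields $\W(K)\le\kappa$ from $\D(X_F)\le\kappa$, while part~(b), together with the standing hypothesis $\W(M)\le\kappa$, yields the converse $\D(X_F)\le\kappa$ from $\W(K)\le\kappa$. Thus $\mu_F X$ is $\kappa$-narrow if and only if $\W(K)\le\kappa$.

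With this equivalence in hand, the forward direction is immediate: if $\mu_{\mathcal B}X$ is $\kappa$-narrow then each $\mu_F X$ is $\kappa$-narrow, so $\D(X_F)\le\kappa$, whence $\W(K)\le\kappa$ by part~(a); since weight is monotone under subspaces and $F\subseteq K$, this gives $\W(F)\le\W(K)\le\kappa$. For the reverse direction I would start from $\W(F)\le\kappa$, upgrade it to $\W(K)\le\kappa$, invoke part~(b) to obtain $\D(X_F)\le\kappa$ for every $F\in\mathcal B$, conclude that each $\mu_F X$ is $\kappa$-narrow, and finally apply Proposition~\ref{pro_pro} to the diagonal subspace of $\prod_F\mu_F X$ to recover the $\kappa$-narrowness of $\mu_{\mathcal B}X$. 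When the members of $\mathcal B$ are pointwise compact one has $F=K$, and the upgrade is vacuous.

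The one step where genuine work is needed is precisely this upgrade, namely passing from $\W(F)\le\kappa$ to $\W(K)\le\kappa$ for the pointwise closure $K=cl_{M^X}F$. It is delicate because only the trivial inequality $\W(F)\le\W(K)$ goes in the easy direction, and the reverse can fail dramatically: although $F$ is automatically uniformly equicontinuous for $d_F$ (indeed $d_F(x,y)<\varepsilon$ forces $d(f(x),f(y))<\varepsilon$ for all $f\in F$), equicontinuity by itself does not bound $\W(K)$ in terms of $\W(F)$ --- the coordinate projections of the Cantor cube form a countable, hence weight-$\aleph_0$, equicontinuous family whose pointwise closure is homeomorphic to $\beta\omega$ and so has strictly larger weight. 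The scheme above therefore closes cleanly once the controlling quantity is read as the weight of the pointwise-compact closure (as it is, for instance, whenever the members of $\mathcal B$ are pointwise compact); everything outside this single comparison is formal bookkeeping with uniform covers and products together with the already-established Theorem~\ref{th_npeso} and Proposition~\ref{pro_pro}.
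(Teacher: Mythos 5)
Your proposal follows the paper's own route step for step: the reduction to single members of $\mathcal B$ via the diagonal embedding of $\mu_{\mathcal B}X$ into $\prod_{F\in\mathcal B}\mu_F X$ and Proposition~\ref{pro_pro}, the identification of $\kappa$-narrowness of the pseudometrizable space $\mu_F X$ with $\D(X_F)\leq\kappa$, and the two applications of Theorem~\ref{th_npeso}. Your forward direction is exactly the paper's. The one place where you diverge is that you make explicit what Theorem~\ref{th_npeso}(b) actually requires, namely $\W(K)\leq\kappa$ for the pointwise closure $K=cl_{M^X}F$, whereas the hypothesis of the present theorem only supplies $\W(F)\leq\kappa$. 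The paper's proof of ($\Leftarrow$) passes over this point silently (``Since $\W(F)\leq\kappa$ \dots we deduce by Theorem~\ref{th_npeso} again that $\D(X_F)\leq\kappa$''), so the gap you isolate is genuine, and it sits in the paper's own argument rather than in your reconstruction of it.

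Moreover, your counterexample shows the gap cannot be closed while keeping the statement as written. Take $X=\{0,1\}^{\omega}$ as a set, $M=\{0,1\}$ discrete, $Y=F=\{\pi_n : n<\omega\}$ the coordinate projections, $\mathcal B=\mathcal P(F)$, and $\kappa=\aleph_0$. Every member of $\mathcal B$ is pointwise relatively compact, $\W(M)\leq\aleph_0$, and $F$ is countable and discrete in the pointwise topology (evaluation at the indicator function of $\{n\}$ isolates $\pi_n$), so $\W(F')\leq\aleph_0$ for every $F'\in\mathcal B$. Yet $d_F$ is the $\{0,1\}$-valued discrete metric on a set of cardinality $\mathfrak c$, so $\mu_{\mathcal B}X=\mu_F X$ is uniformly discrete of power $\mathfrak c$ and is not $\aleph_0$-narrow. (This is consistent with Theorem~\ref{th_npeso}: here $K=cl_{M^X}F\cong\beta\omega$ has weight $\mathfrak c$.) Hence ($\Leftarrow$) is false as literally stated. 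The correct equivalence is ``$\mu_{\mathcal B}X$ is $\kappa$-narrow if and only if $\W(cl_{M^X}F)\leq\kappa$ for all $F\in\mathcal B$'', or equivalently one should require the members of $\mathcal B$ to be pointwise compact (hence closed in $M^X$), in which case $F=K$ and both your argument and the paper's go through verbatim. The same amendment must then be carried into the downstream uses of the ($\Leftarrow$) direction, for instance the definition of $\mathcal B_\kappa$ preceding Proposition~\ref{pr_lindelof}, where the weight condition should be imposed on the pointwise closures of the members of the bornology.
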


As a consequence, we obtain the following corollary for general topological spaces.

\begin{cor}
If $X$ is a $\kappa$-Lindelöf space and $F$ is an equicontinuous subset
of  $C(X,M)$ then $\W(F)\leq \kappa$.
\end{cor}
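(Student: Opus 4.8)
The plan is to reduce the statement to Theorem~\ref{th_npeso}, in the same way as the first Corollary of this section, by fabricating from the topological hypothesis on $X$ the uniform hypothesis that theorem requires. First I would equip the set $X$ with the pseudo-uniformity $\mu_F$ attached to the pseudometric $d_F(x,y)=\sup_{f\in F}d(f(x),f(y))$. With respect to $\mu_F$ the family $F$ is uniformly equicontinuous by construction, so that we are precisely in the situation of the first Corollary of this section with $\mu_F$ playing the role of the ambient uniformity (the evaluation map $\mathfrak e_F$ being, trivially, uniformly continuous). Hence, once $\mu_F X$ is known to be $\kappa$-narrow, the argument there applies and yields $\W(F)\le\kappa$, and everything reduces to the single implication: $X$ is $\kappa$-Lindel\"of $\Rightarrow$ $\mu_F X$ is $\kappa$-narrow.

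I expect this implication to be the main obstacle, and it is exactly where the equicontinuity of $F$ is used. The key point is that equicontinuity forces the $\mu_F$-topology to be coarser than the original topology of $X$: given $x_0\in X$ and $\epsilon>0$, equicontinuity furnishes a neighbourhood $U$ of $x_0$ with $d(f(x),f(x_0))<\epsilon$ for all $x\in U$ and all $f\in F$, so $d_F(x,x_0)\le\epsilon$ on $U$ and the closed $d_F$-ball of radius $\epsilon$ about $x_0$ contains $U$. As $\epsilon$ is arbitrary, every $\mu_F$-open set is a neighbourhood of each of its points in the given topology; in particular every open $d_F$-ball is open in $X$. Now take an arbitrary $\mu_F$-uniform cover $\mathcal U$ of $X$; by definition it is refined by a cover $\{B_\epsilon[x]:x\in X\}$ of open $d_F$-balls, which by the previous remark is an open cover of $X$ in its original topology. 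Since $X$ is $\kappa$-Lindel\"of, I extract a subcover $\{B_\epsilon[x_i]:i\in I\}$ with $|I|\le\kappa$ and pick, for each $i$, some $U_i\in\mathcal U$ with $B_\epsilon[x_i]\subseteq U_i$; then $\{U_i:i\in I\}$ is a subcover of $\mathcal U$ of cardinality $\le\kappa$. Hence $\mu_F X$ is $\kappa$-narrow.

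Being pseudometrizable, $\mu_F X$ then satisfies $\D(X_F)\le\kappa$, and an application of Theorem~\ref{th_npeso}(a) to $\Phi(x,f)=f(x)$ closes the reduction with $\W(F)\le\kappa$. Two hypotheses enter silently through Theorem~\ref{th_npeso} and I would make them explicit. First, that $K=cl_{M^X}F$ be compact: for an equicontinuous $F$ this is guaranteed by Ascoli's theorem as soon as $F$ is pointwise relatively compact, and on such an $F$ the pointwise and compact-open topologies coincide, so the weight $\W(F)$ is unambiguous. Second, that $\W(M)\le\kappa$: this is genuinely necessary, since already for $X$ a single point and $F=C(X,M)\cong M$ the conclusion would force $\W(M)\le\kappa$, so I would carry $\W(M)\le\kappa$ as a standing hypothesis exactly as in Theorem~\ref{th_alfa_peso}. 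With these provisos in hand the corollary can equally be obtained as a direct instance of Theorem~\ref{th_alfa_peso}, taking on $F$ the bornology generated by $F$ itself, for which $\mu_{\mathcal B}=\mu_F$.
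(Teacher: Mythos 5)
Your argument is correct and is essentially the paper's own route: the paper offers no explicit proof (the corollary is stated as a direct consequence of the preceding results), and the intended derivation is precisely your reduction --- equicontinuity makes every open $d_F$-ball open in $X$, so every $\mu_F$-uniform cover admits an open refinement by such balls, $\kappa$-Lindel\"ofness then gives a subcover of cardinality $\leq\kappa$, hence $\mu_F X$ is $\kappa$-narrow, pseudometrizability gives $\D(X_F)\leq\kappa$, and Theorem \ref{th_npeso}(a) yields $\W(F)\leq \W(K)\leq\kappa$. Your first proviso (pointwise relative compactness of $F$, so that $K=cl_{M^{X}}F$ is compact) correctly makes explicit the standing convention of the section, without which the literal statement would fail.

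Your second proviso, however, is mistaken: $\W(M)\leq\kappa$ is not needed, and carrying it as a hypothesis makes the statement you prove strictly weaker than the corollary. Your one-point counterexample violates your own first proviso: if $X$ is a singleton and $F=C(X,M)\cong M$ is pointwise relatively compact, then $M$ itself must be compact, hence second countable, and no contradiction with $\W(F)\leq\kappa$ arises. The underlying reason is visible in Theorem \ref{th_npeso} itself: part (a), unlike part (b), carries no hypothesis on $\W(M)$. Indeed, a dense set $A\subseteq X_F$ with $|A|\leq\kappa$ separates the points of the equicontinuous compactum $K$, so the restriction map embeds $K$ homeomorphically into $\prod_{a\in A}\pi_a(K)$, a product of at most $\kappa$ compact metrizable --- hence second-countable --- spaces, which gives $\W(K)\leq\kappa$ no matter how large $\W(M)$ is. The same remark applies to your closing alternative via Theorem \ref{th_alfa_peso}: only the forward implication of that theorem is used here, and that implication never invokes $\W(M)\leq\kappa$.
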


\section{Dually Representable MAP Groups}

Assume that our overall goal now is to
study the group topologies that can be defined using
finite dimensional representations. Because we are interested in Hausdorff
topologies, it will suffice to deal only
with groups whose finite dimensional representations separate points (MAP groups).
Firstly, it is necessary to remember
which collections of representations
identify a set of \emph{continuous} representations for
some group topology. Our approach here is based on the celebrated Tannaka-Kre\v{\i}n
duality, where we have followed the well know text by Hewitt and Ross
\cite{hr:ii} although this theory is currently treated using categorical
methods. We recommend the article by Joyal and Street \cite{joy_str:tannaka}
that is illuminating in this regard.

%\subsection{Representation spaces}

Let $G$ be a topological group and $\Gamma\subseteq \repg$ be a
separating subset, that is, for all $e_G\not= x\in G$ there
exists $\varphi\in \Gamma$ such that $\varphi(x)$ is not the identity matrix.
If $\Gamma_n$ denotes the set of all $n$-dimensional elements in
$\Gamma$, we have $\Gamma=\bigcup\limits
_{n < \omega}\Gamma_n$ and we say that $\Gamma$ is a
\emph{representation space} for $G$ if  it satisfies:
\begin{enumerate}
\item $\varphi_1\in \Gamma_n,\varphi_2\in \Gamma_m\Rightarrow \varphi_1\oplus \varphi_2\in
\Gamma_{n+m},\, \varphi_1\otimes \varphi_2\in \Gamma_{nm}$; \item if $\varphi\in \Gamma_n$ and $M\in
\mathbb U(n)$, then $\bar \varphi$ and $M^{-1}\varphi M$ belong to $\Gamma_n$;
\item if $\Gamma\ni \varphi=\oplus_{j=1}^m \varphi_j$ then $\varphi_j\in
\Gamma$ for all $1\leq j\leq m$; \item $1\in \Gamma_1$.
\end{enumerate}

We notice that (4) follows from (1)-(3). It is also possible to define representation spaces using
only equi\-va\-lence classes of irreducible representations.
Moreover, the notion of \emph{Kre\v{\i}n} algebra makes possible to define
the notion of representation space without referring to any specific group
$G$.

A bornology $\mathcal B$ of subsets in $\Gamma$ (and consequently
$\mathcal B_n=\mathcal B \cap \mathcal P(\Gamma_n)$ is a bornology in $\Gamma_n$)
is named  \emph{(representation) bornology} when satisfies the following
properties:
\begin{enumerate} \item for every $P\in \mathcal B$, there is a
natural number $N_P$ such that $P=\bigcup\limits_{n<N_P}P_n$, with $P_n\in \mathcal B_n$
and some $P_n$ may be the empty set;
\item if $P\in \mathcal{B}_n,Q\in
\mathcal{B}_m\Rightarrow P\oplus Q\in \mathcal{B}_{n+m},\,
P\otimes Q\in \mathcal{B}_{nm}$.%; \item if $P\in \mathcal{B}_n$ and
%$M\in \mathbb U(n)$ then $\bar P$ and $M^{-1}PM$ belong to $\mathcal{B}_n$.
\end{enumerate}
The pair $(\Gamma,\mathcal B)$ allows us to equip
$G$ with a topology $t(\mathcal B)$ that has as a neighborhood base
at the identity the family of sets $[P,\epsilon]$ where
\begin{enumerate}
\item $[P,\epsilon]=G$ if $P=\emptyset$;
\item $[P,\epsilon]=\{g\in G:  \|\varphi(g)-I_n\|\leq\epsilon\ \mbox{for all}\ \varphi\in
P\}$ if $\emptyset\not= P$ belongs to $\mathcal{B}_n$; and
\item $[P,\epsilon]=\bigcap\limits_{n<N_P} [P_n,\epsilon]$ if $P=\bigcup\limits_{n<N_P}P_n\in \mathcal B$.
\end{enumerate}

\noindent Here $I_n$ is the identity matrix in $\mathbb{U}(n)$ and $\epsilon>0$ for every $n\in \mathbb{N}$.
%\textbf{Note that the sets $[P,\epsilon)=\{g\in G:  \|\varphi(g)-I_n\|<\epsilon\ \mbox{for all}\ \varphi\in P\}$ for a
%meighborhood base of opens sets at the identity too.}

For instance, we
shall make us of the topology $t(Fin(\repg))$ associated to
the bornology $Fin(\repg)$ consisting of all finite subsets of $\repg$. In case
that $G$ is a compact group, the celebrated Peter-Weyl Theorem establishes that
the groups $G$ and $G_{t(Fin(\repg))}$ are topologically isomorphic. Observe that
the  bornology $Fin(\repg)$ defines the initial, or weak, (group) topology
generated by $\repg$. In this case, we may say that $Fin(\repg)$ is the smallest bornology on $\repg$.
Moreover, for any algebraic group $G$, there  exists a biggest
bornology $\mathcal B^+$ that is generated
by the collection $\{ \rep_n(G_d) : n<\omega \}$.
The group $G_{t(\mathcal B^+)}$ is not necessarily discrete as it will be seen below
 (cf. \cite{hernwu06}). If $G_\tau$ is a topological group and $\mathcal E$
 denotes the bornology of equicontinuous subsets on $\repg$, then
 $t(Fin(\repg))\subseteq t(\mathcal E)\subseteq \tau$.

The proof of the following result is clear.
\begin{Proposition}\label{pr_top}
$(G,t(\mathcal{B)})$ is a Hausdorff topological group.
\end{Proposition}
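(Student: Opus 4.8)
The plan is to check that the family $\mathcal N=\{[P,\epsilon]:P\in\mathcal B,\ \epsilon>0\}$ satisfies the classical axioms that characterize a neighborhood base at $e_G$ of a (Hausdorff) group topology: that $\mathcal N$ is a filter base of sets containing $e_G$, and that for each $[P,\epsilon]$ there are members of $\mathcal N$ carried into $[P,\epsilon]$ by squaring, by inversion and by conjugation with a fixed element, together with $\bigcap\mathcal N=\{e_G\}$ for Hausdorffness. The first move is a reduction: by axiom (1) of a representation bornology each $P\in\mathcal B$ is a \emph{finite} union $\bigcup_{n<N_P}P_n$ with $P_n\in\mathcal B_n$, so $[P,\epsilon]=\bigcap_{n<N_P}[P_n,\epsilon]$ is a finite intersection of the homogeneous sets. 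Since each closure property I must verify passes through finite intersections, the whole argument reduces at once to the homogeneous case $P\in\mathcal B_n$.

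That $\mathcal N$ is a filter base is immediate: $\varphi(e_G)=I_n$ for every representation, so $e_G\in[P,\epsilon]$; and given $[P,\epsilon]$, $[Q,\delta]$, the bornology axiom (i) gives $P\cup Q\in\mathcal B$, whence $[P\cup Q,\min\{\epsilon,\delta\}]\subseteq[P,\epsilon]\cap[Q,\delta]$. The two algebraic conditions then rest on a single fact, namely that the operator norm on $\mathbb U(n)$ is invariant under left and right multiplication by unitaries and under the adjoint. Concretely, using that every $\varphi\in P$ is a homomorphism into $\mathbb U(n)$, the decisive computation would be
$$\|\varphi(gh)-I_n\|\leq\|(\varphi(g)-I_n)\varphi(h)\|+\|\varphi(h)-I_n\|=\|\varphi(g)-I_n\|+\|\varphi(h)-I_n\|,$$
which yields $[P,\epsilon/2]\cdot[P,\epsilon/2]\subseteq[P,\epsilon]$, and $\|\varphi(g^{-1})-I_n\|=\|\varphi(g)^{-1}-I_n\|=\|\varphi(g)-I_n\|$, which yields $[P,\epsilon]^{-1}=[P,\epsilon]$.

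The step that at first sight looks like the genuine obstacle, because it is exactly the condition absent from the abelian theory, is invariance under inner automorphisms: for fixed $a\in G$ one needs a neighborhood $V$ with $aVa^{-1}\subseteq[P,\epsilon]$. Here, however, $\varphi(aga^{-1})=\varphi(a)\varphi(g)\varphi(a)^{-1}$ with $\varphi(a)$ unitary, so conjugation-invariance of the operator norm gives $\|\varphi(aga^{-1})-I_n\|=\|\varphi(g)-I_n\|$ and hence $a[P,\epsilon]a^{-1}=[P,\epsilon]$ for every $a$; thus the seemingly delicate condition becomes automatic and $t(\mathcal B)$ is a group topology. Finally, for Hausdorffness I would show $\bigcap\mathcal N=\{e_G\}$: if $x\neq e_G$, the separating property of $\Gamma$ furnishes $\varphi\in\Gamma_n$ with $\|\varphi(x)-I_n\|=:\delta_0>0$; since $\{\varphi\}\in Fin(\Gamma)\subseteq\mathcal B$ by bornology axiom (iii), any $\epsilon<\delta_0$ gives $x\notin[\{\varphi\},\epsilon]$, so $x\notin\bigcap\mathcal N$.
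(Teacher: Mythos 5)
Your proof is correct: the paper itself gives no argument (it declares the result ``clear''), and your verification --- filter base, the unitary-invariance computations $\|\varphi(gh)-I_n\|\leq\|\varphi(g)-I_n\|+\|\varphi(h)-I_n\|$, $\|\varphi(g^{-1})-I_n\|=\|\varphi(g)-I_n\|$, $\|\varphi(aga^{-1})-I_n\|=\|\varphi(g)-I_n\|$, plus separation via $Fin(\Gamma)\subseteq\mathcal B$ --- is exactly the routine check the authors intended to omit. Nothing is missing; in particular you correctly identified that conjugation-invariance, the only point genuinely beyond the abelian case, is automatic from unitary invariance of the operator norm.
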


A topology $t(\mathcal{B})$ on a group $G$ which is associated
to a bornology $\mathcal B$ on a representation space $\Gamma$ is
named \emph{dually induced topology} and we say that
$G_{t(\mathcal{B})}$ is a \emph{dually induced topological
group}. This notion has been extensively studied for abelian
groups and it turns out that most important classes of topological abelian groups
possesses dually induced topologies. The main goal of this paper is the
understanding of the dually induced group topologies for
non-abelian MAP groups.

It is not difficult to verify that for every group $G$ and
representation space $\Gamma$, the bornology $Fin(\Gamma)$ defines
a totally bounded topology on $G$. Conversely, we show next that every
totally bounded group topology is a dually induced topology. This our
version of a a similar result given by Comfort and Ross for
totally bounded abelian groups (cf. \cite{com_ros:BoundedTopologies}).

%We say that $\Gamma$ is
%\emph{saturated} when every continuous finite dimensional
%irreducible representation on $(G,\tau_{Fin(\Gamma)})$ is
%equivalent to an element of $\Gamma$.
\begin{Theorem}
For any group $G$, the lattice of (Hausdorff) totally bounded group topologies
on $G$ is bijectively isomorphic to the lattice of representation spaces of $G$.
\end{Theorem}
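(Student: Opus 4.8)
The plan is to exhibit two mutually inverse, order-preserving maps between the poset of Hausdorff totally bounded group topologies on $G$ (ordered by inclusion) and the poset of representation spaces of $G$, viewed as separating subsets of $\rep(G_d)$ (also ordered by inclusion). Since an order-isomorphism between posets automatically carries over whatever meets and joins exist, this yields the asserted lattice isomorphism. In one direction I send a representation space $\Gamma$ to the dually induced topology $\Phi(\Gamma)=t(Fin(\Gamma))$; in the other I send a totally bounded topology $\tau$ to $\Psi(\tau)=\rep(G_\tau)$, the set of all continuous finite dimensional unitary representations of $G_\tau$.

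First I would check that both maps are well defined. That $\Phi(\Gamma)$ is a Hausdorff group topology follows from Proposition \ref{pr_top} together with the hypothesis that $\Gamma$ separates points; it is totally bounded because $t(Fin(\Gamma))$ is precisely the initial topology induced on $G$ by the family $\{\varphi:\varphi\in\Gamma\}$ of maps into the compact groups $\U(n)$, so the diagonal $\delta=\prod_{\varphi\in\Gamma}\varphi$ embeds $G$ into the compact group $\prod_{\varphi\in\Gamma}\U(n_\varphi)$, whose subgroups carry totally bounded uniformities. For $\Psi$ I must verify that $\rep(G_\tau)$ satisfies the four representation-space axioms: closure under $\oplus$ and $\otimes$, under $\bar\varphi$ and $M^{-1}\varphi M$, under passage to direct summands, and containment of the trivial character $1\in\Gamma_1$. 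Each is immediate, since the corresponding operation on continuous homomorphisms into unitary groups again yields a continuous homomorphism; and $\rep(G_\tau)$ separates points because $G_\tau$ is Hausdorff and totally bounded, hence MAP.

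Monotonicity of both maps is routine: if $\tau_1\subseteq\tau_2$ then every $\tau_1$-continuous representation is $\tau_2$-continuous, so $\Psi(\tau_1)\subseteq\Psi(\tau_2)$; and if $\Gamma_1\subseteq\Gamma_2$ then $Fin(\Gamma_1)\subseteq Fin(\Gamma_2)$ produces more basic neighborhoods, whence $\Phi(\Gamma_1)\subseteq\Phi(\Gamma_2)$. It remains to establish $\Psi\circ\Phi=\mathrm{id}$ and $\Phi\circ\Psi=\mathrm{id}$. The composite $\Phi\circ\Psi$ is the easier one: every $\varphi\in\rep(G_\tau)$ is by definition $\tau$-continuous, giving $\Phi(\Psi(\tau))=t(Fin(\rep(G_\tau)))\subseteq\tau$; conversely, a totally bounded topology coincides with the weak topology generated by its own continuous unitary representations, which is the content of the Peter-Weyl theorem applied to the compact completion of $G_\tau$, and this yields the reverse inclusion.

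The main obstacle is the identity $\Psi(\Phi(\Gamma))=\Gamma$, that is $\rep(G_{t(Fin(\Gamma))})=\Gamma$. The inclusion $\Gamma\subseteq\rep(G_{t(Fin(\Gamma))})$ is clear, since each $\varphi\in\Gamma$ is continuous for the very initial topology it helps to define. For the reverse inclusion I would invoke Tannaka-Kre\v{\i}n duality. Let $K$ be the compact completion of $G$ under $t(Fin(\Gamma))$; then $G$ is dense in $K$, and since each $\varphi\in\Gamma$ is uniformly continuous into its complete target $\U(n_\varphi)$, it extends to $K$, so the members of $\Gamma$ are exactly the restrictions to $G$ of a topologically generating family of continuous representations of $K$. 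By Peter-Weyl every continuous finite dimensional representation of $K$ is a subrepresentation of a finite direct sum of tensor products of this family and its conjugates; and a subrepresentation of a unitary representation is an orthogonal direct summand. As the axioms close $\Gamma$ under $\oplus$, $\otimes$, conjugation and direct summands, every such representation of $K$ restricts into $\Gamma$. A continuous representation of $G_{t(Fin(\Gamma))}$ extends to $K$ by uniform continuity and so arises in this way, hence lies in $\Gamma$. Verifying that the representation-space axioms precisely match the Tannaka-Kre\v{\i}n closure conditions for $K$ — in particular that axiom (3) delivers closure under subrepresentations by way of unitarity — is where the genuine work resides.
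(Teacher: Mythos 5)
Your overall strategy is the same as the paper's: the same pair of maps $\tau\mapsto\rep(G_\tau)$ and $\Gamma\mapsto t(Fin(\Gamma))$, with the Weil completion, Peter--Weyl and Tannaka--Kre\v{\i}n duality carrying the load, and the direction $t(Fin(\rep(G_\tau)))=\tau$ is handled exactly as in the paper. But there is one genuine gap, and it sits precisely at the step the paper treats as the crux. When you pass to the completion $K$ of $G_{t(Fin(\Gamma))}$, you write that each $\varphi\in\Gamma$ extends to $K$, ``so'' the extensions form a topologically generating family of representations of $K$. That ``so'' is unjustified, and the inference it suggests --- $G$ dense in $K$, $\Gamma$ separates points of $G$, each $\varphi$ extends, hence the extensions separate points of $K$ --- is false in general. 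Example: let $K=\T$, let $G=\Z(p^\infty)$ be the Pr\"ufer group, dense in $\T$ with the subspace topology, and let $\Gamma$ be the representation space generated by the characters $g\mapsto g^{qn}$, $n\in\Z$, with $q>1$ coprime to $p$. Then $\Gamma$ separates points of $G$ (no element of $G$ has order divisible by $q$), every member of $\Gamma$ is continuous for the subspace topology and extends to $\T$, yet the extensions $z\mapsto z^{qn}$ all annihilate the group of $q$-th roots of unity, so they do not separate points of $K$. Without separation on $K$, your Peter--Weyl/Stone--Weierstrass fullness step has nothing to bite on: the matrix coefficients of the extended family and its conjugates generate a non-dense subalgebra of $C(K)$, and any representation of $K$ nontrivial on $\bigcap_{\varphi\in\Gamma}\ker\varphi^K$ never occurs inside tensor products of the family.

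What saves the argument --- and what must be said explicitly --- is that $t(Fin(\Gamma))$ is exactly the initial topology induced by $\Gamma$; in the example above the subspace topology is strictly finer than $t(Fin(\Gamma))$, which is why it does not contradict the theorem itself. Concretely: since $\tau=t(Fin(\Gamma))$ is initial, the diagonal $\delta=\prod_{\varphi\in\Gamma}\varphi$ (which you already introduced to prove total boundedness) is a topological embedding of $G_\tau$ into $\prod_{\varphi\in\Gamma}\U(n_\varphi)$; the closure $\overline{\delta(G)}$ is a compact group containing a dense copy of $G_\tau$, hence by uniqueness of completions it may be identified with $K$, and under this identification each $\varphi^K$ is a coordinate projection, so the extended family separates points of $K$ and generates its topology. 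Alternatively, run the paper's argument: given $e\neq x\in K$, choose a neighbourhood $U$ of $e$ in $K$ with $x\notin U^2$, pick $F\in Fin(\Gamma)$ and $\epsilon>0$ with $[F,\epsilon]\subseteq U\cap G$, and deduce by continuity and density that $\|\varphi^K(x)-I\|\geq\epsilon/2$ for some $\varphi\in F$. Either way, this separation claim is where the real content of $\rep(G_{t(Fin(\Gamma))})=\Gamma$ lies --- not, as you suggest at the end, in matching axiom (3) to closure under subrepresentations, which (together with axiom (2), to conjugate an invariant subspace into coordinate position) is routine.
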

\begin{proof}
Let $\mathcal{HTB}(G)$ denote the lattice of Hausdorff totally bounded group topologies
on $G$ and let $\mathcal{RS}(G)$ be the lattice of representation spaces on $G$.
We define $$\Phi : \mathcal{HTB}(G)\longrightarrow \mathcal{RS}(G)\ \hbox{by}\ \Phi(\tau)=\hbox{rep}(G_\tau)$$
and $$\Psi : \mathcal{RS}(G)\longrightarrow \mathcal{HTB}(G)\ \hbox{by}\ \Psi(\Gamma)=t(Fin(\Gamma)).$$
Clearly, these maps are well defined because the groups $\U(n)$ are compact.
It will suffice to verify that $\Phi\circ \Psi$ and $\Psi\circ \Phi$ coincide with the identity mapping in order
to finish the proof. Suppose that $\tau\in \mathcal{HTB}(G)$ and let $K$ be the
Weil completion of $G_\tau$, that is a compact group. The groups $K$
and $G_\tau$ possess the same set of continuous representations $\rep(G_\tau)$
and, by Peter-Weyl Theorem, it follows that
$K$ is equipped with the topology $t(Fin(\rep(G_\tau)))$. Therefore,
since $G_\tau$ is dense in $K$, we have $\tau=t(Fin(\rep(G_\tau)))$, which shows
that $\Psi\circ \Phi$ is the identity map. Now, let $\Gamma\in \mathcal{RS}(G)$ and
set $\tau=t(Fin(\Gamma))=\Psi(\Gamma)$. Again, let $K$ be the compact group obtained
as the Weil completion of $G_\tau$. By continuity, it is clear that every element $\varphi\in \Gamma$
can be extended to a continuous representation $\varphi^K$ on $K$. Thus, we have that
the representation space $\Gamma^K=\{\varphi^K : \varphi\in \Gamma \}$ is canonically included in
$\hbox{rep}(K)$. Moreover, if $e\not=x\in K$, there is a neighbourhood of the neutral element $U$
such that $x\notin U^2$. Take $F\in Fin(\Gamma)$ and $\epsilon >0$ such that
$[F,\epsilon]\subseteq U\cap G$. Using a standard continuity argument, it is readily seen
that $\|\varphi^K(x)-I\|\geq \epsilon/2$. Thus the set $\Gamma^K$ satifies (1)-(4) and
separates the points in $K$. By Tannaka duality this means that $\Gamma^K=\hbox{rep}(K)$
(see \cite{joy_str:tannaka}). Since $K$ and $G_\tau$ posess the same continuous representations,
it follows that $\Phi\circ \Psi$ is the identity and this completes the proof.
\end{proof}

\begin{cor}
The bornology $Fin(\rep(G_d))$ gives the largest totally bounded
group topology on $G$.
\end{cor}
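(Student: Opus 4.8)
The plan is to read this off directly from the lattice isomorphism of the preceding theorem, by exhibiting $\rep(G_d)$ as the top element of the lattice $\mathcal{RS}(G)$ of representation spaces and transporting that maximum across the isomorphism $\Phi,\Psi$. First I would observe that $\rep(G_d)$ is itself a representation space: every finite-dimensional unitary representation of the underlying abstract group is automatically continuous for the discrete topology, so $\rep(G_d)$ is precisely the set of \emph{all} finite-dimensional unitary representations of $G$. It is closed under $\oplus$, $\otimes$, complex conjugation and unitary conjugation, it contains the trivial character $1\in\Gamma_1$, and --- crucially --- it separates the points of $G$ because $G$ is assumed to be MAP. Hence $\rep(G_d)\in\mathcal{RS}(G)$. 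Moreover, any representation space $\Gamma$ consists of homomorphisms into the groups $\U(n)$, so $\Gamma\subseteq\rep(G_d)$; thus $\rep(G_d)$ is the maximum of $\mathcal{RS}(G)$ ordered by inclusion.

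Next I would verify that the correspondence $\Psi(\Gamma)=t(Fin(\Gamma))$ is monotone for the natural orders, i.e. that a larger representation space produces a finer group topology. This is immediate from the shape of the neighborhood base $[P,\epsilon]$: if $\Gamma_1\subseteq\Gamma_2$ then $Fin(\Gamma_1)\subseteq Fin(\Gamma_2)$, so every basic identity neighborhood for $t(Fin(\Gamma_1))$ is also one for $t(Fin(\Gamma_2))$, whence $t(Fin(\Gamma_1))\subseteq t(Fin(\Gamma_2))$. The theorem already guarantees that $\Psi$ is a bijection with inverse $\Phi(\tau)=\rep(G_\tau)$, and $\Phi$ is monotone for the same reason (more continuous representations for a finer topology); therefore $\Psi$ is an order isomorphism and carries the maximum to the maximum. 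Consequently $\Psi(\rep(G_d))=t(Fin(\rep(G_d)))$ is the largest element of $\mathcal{HTB}(G)$.

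To make the domination explicit, I would note that for any $\tau\in\mathcal{HTB}(G)$ the theorem gives $\tau=t(Fin(\rep(G_\tau)))$, and since $\rep(G_\tau)\subseteq\rep(G_d)$ the monotonicity just established yields $\tau=t(Fin(\rep(G_\tau)))\subseteq t(Fin(\rep(G_d)))$; as the latter topology is itself Hausdorff (its representations separate points) and totally bounded, it is the largest totally bounded group topology on $G$. The only genuinely delicate ingredient is the very first step --- that $\rep(G_d)$ separates points --- which is exactly the MAP hypothesis, together with the routine bookkeeping that the order on $\mathcal{HTB}(G)$ (fineness of topologies) corresponds to inclusion of representation spaces under $\Phi$ and $\Psi$. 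Everything else is a transport of structure through the isomorphism.
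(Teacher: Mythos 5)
Your proof is correct and follows exactly the route the paper intends: the corollary is stated as an immediate consequence of the lattice isomorphism theorem, and you supply the (routine) verifications that $\rep(G_d)$ is the top element of $\mathcal{RS}(G)$, that $\Psi(\Gamma)=t(Fin(\Gamma))$ is monotone, and that every $\tau\in\mathcal{HTB}(G)$ equals $t(Fin(\rep(G_\tau)))\subseteq t(Fin(\rep(G_d)))$. No gaps; this matches the paper's approach.
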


Unfortunately, the situation is not so nice for more general
classes of groups. In the following example (due to Moran) it
is shown that there
are discrete MAP groups that are not dually induced topological
groups.

Let $(p_n)$ be an infinite
sequence of distinct primes numbers ($p_n>2$) and let $G_n$ be
the projective special linear group of dimension two over the
Galois field $GF(p_n)$ of order $p_n$.
If $G=\prod\limits_{n=1}^{\infty}G_n$ equipped with discrete topology, then
$G_{t(\mathcal B^+)}=\prod\limits_{n=1}^{\infty}G_n$ (with product topo\-lo\-gy).

In general, if $G$ is a compact group such that $bG_d=G$, then $G_d$ is not a dually induced topological group.
Indeed, it is known (see
\cite{hart_kunen:BohrNonabelianGroups,gal_her:advances}) that, for
these groups, $\rep(G_d)=\rep(G)$ and moreover each $(\widehat{G_{d}})_n$ is finite
for all $n<\omega$.
As a consequence, the same is true for any maximal subset of pairwise non-equivalent collection
of $n$-dimensional irreducible representations, say $\Gamma(G_d)_n$. Since every finite-dimensional
representation is a direct sum of irreducible representations, this means that the group
$G_{t(\mathcal B^+)}$ associated to the biggest bornology
$\mathcal B^+$ on (the algebraic group) $G$ %(generated
%by the collection $\{ \cup_{1\leq j\leq n} Rep_j(G_d) : n<\omega \}$)
is topologically isomorphic to $G_{t(Fin(\repg))}$, which is topologically
isomorphic to the compact group $G$ by Peter-Weyl Theorem.
This implies that there is just one dually induced
topology on the (algebraic) group $G$; namely, its compact group topology.
Therefore, the group $G_d$ is not a dually induced topological group.

The situation for finitely generated discrete groups has been clarified completely
by A. Thom \cite{thom}.
\begin{Theorem}[Thom, 2010]
A finitely generated discrete group $G$ is a dual topological group if and only if
$G$ is abelian by finite.
\end{Theorem}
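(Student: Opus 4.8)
The plan is to first turn the statement into a quantitative separation problem. Since the biggest bornology $\mathcal B^+$ is generated by the sets $\rep_n(G_d)$ and is closed only under finite unions (and $\oplus,\otimes$, which do not raise the dimension past the bound), every $P\in\mathcal B^+$ has bounded dimension; hence $G_d$ is a dual topological group if and only if its discrete topology equals $t(\mathcal B^+)$, i.e.\ if and only if there are $N\in\N$ and $\epsilon>0$ with $[\rep_{<N}(G_d),\epsilon]=\{e_G\}$, where $\rep_{<N}(G_d)=\bigcup_{n<N}\rep_n(G_d)$. Writing $\delta_N(g)=\sup\{\|\varphi(g)-I\|:\varphi\in\rep_{<N}(G_d)\}$ (a genuine maximum for finitely generated $G$, since each $\mathrm{Hom}(G,\U(n))$ is compact), the question becomes whether $\inf_{g\neq e_G}\delta_N(g)>0$ for some $N$; that is, whether the unitary representations of uniformly bounded dimension separate the points of $G$ from $e_G$ with a \emph{uniform} gap.

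For the implication ``abelian-by-finite $\Rightarrow$ dual topological group'' I would argue directly. Pass to a normal abelian subgroup $A\trianglelefteq G$ of finite index $m=[G:A]$. By Frobenius reciprocity every irreducible unitary representation of $G$ embeds in some $\mathrm{Ind}_A^G\chi$ with $\chi\in\widehat A$, so $\dim\varphi\le m$ for every irreducible $\varphi$; thus the representations of dimension $<N:=m+1$ already carry all the information and it suffices to bound $\delta_{m+1}$ below. I would split on $g$: if $g\in A\setminus\{e_G\}$, then because $A$ is finitely generated abelian one may choose $\chi\in\widehat A$ with $|\chi(g)-1|\ge\sqrt3$, and the diagonal form of $(\mathrm{Ind}_A^G\chi)(g)$ shows $\|(\mathrm{Ind}_A^G\chi)(g)-I\|\ge\sqrt3$; if $g\notin A$, the pullback to $G$ of a fixed faithful representation of the finite group $G/A$ detects $g$ with a gap depending only on $G/A$. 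Taking $\epsilon$ to be half the smaller of these two gaps yields $\delta_{m+1}(g)\ge\epsilon$ for all $g\neq e_G$, which is exactly discreteness of $t(\mathcal B^+)$.

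The hard direction is the converse, and I would prove its contrapositive: if $G$ is \emph{not} abelian-by-finite then $\inf_{g\neq e_G}\delta_N(g)=0$ for every $N$. Concretely I must manufacture, for each $N$, nontrivial elements $g_k$ that are simultaneously $\epsilon_k$-approximate identities for \emph{all} representations of dimension $<N$, with $\epsilon_k\to0$. The engine is the global commutator contraction
\[
\|[u,v]-I\|=\|uv-vu\|=\|(u-I)(v-I)-(v-I)(u-I)\|\le 2\,\|u-I\|\,\|v-I\|,
\]
valid for all unitaries: once one has words $w,w'$ with uniform defect $\le s<\tfrac12$ over every $\U(n)$, $n<N$, the commutator $[w,w']$ has defect $\le 2s^2$, and iterating drives the defect to $0$. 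Two points then carry the whole argument. First, a \emph{bootstrap}: the production of a single nontrivial word whose image on each $\U(n)$, $n<N$, is uniformly bounded away from the far region (defect $<\tfrac12$); this is a statement about the non-surjectivity of carefully chosen word maps and is the genuinely delicate step. Second, \emph{non-triviality in $G$}: the words built this way lie deep in the lower central series, so one must use that $G$ is not abelian-by-finite to guarantee that infinitely many of them remain $\neq e_G$ in $G$ itself; in the presence of a central obstruction (for instance a discrete-Heisenberg quotient) there is a shortcut, since taking determinants in $\varphi([x,y])=[\varphi(x),\varphi(y)]$ forces the center into scalar roots of unity of order dividing $\dim\varphi$, so a suitable central power lies in $\bigcap_{\dim<N}\ker\varphi$ and gives $\delta_N=0$ at once.

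I expect the bootstrap together with the uniform control of non-triviality in $G$ to be the main obstacle: the commutator recursion and the compactness of $\mathrm{Hom}(G,\U(n))$ are routine, but exhibiting nontrivial words that are uniform approximate laws on all bounded-dimensional unitary groups, and ensuring that a group which is not abelian-by-finite cannot annihilate them all, is exactly the quantitative word-map and law analysis that constitutes Thom's contribution; at this step I would invoke his estimates rather than reprove them.
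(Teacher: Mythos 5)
A preliminary remark on the comparison you were asked for: the paper contains \emph{no} proof of this statement --- it is quoted as Thom's theorem and referenced to \cite{thom} --- so your proposal can only be judged on its own merits. Your reduction is correct: since every representation bornology consists of sets of uniformly bounded dimension, the discrete topology is dually induced if and only if $t(\mathcal B^+)$ is discrete, i.e.\ if and only if $[\bigcup_{n<N}\rep_n(G_d),\epsilon]=\{e_G\}$ for some $N$ and $\epsilon>0$. Your argument for ``abelian-by-finite $\Rightarrow$ dual'' (induced characters giving the uniform gap $\sqrt3$ on $A$, a faithful representation of the finite quotient detecting $g\notin A$) is complete and correct.

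The converse direction has a genuine gap, and it is not only the ``bootstrap'' (a nontrivial word with uniformly small image on all $\U(n)$, $n<N$), which you defer to Thom --- already circular if the goal is a proof of Thom's theorem, though admittedly on a par with the paper's own citation. The deeper problem is your mechanism for non-triviality in $G$: you claim that because $G$ is not abelian-by-finite, infinitely many of the iterated commutator words survive in $G$. That is false in principle, because groups that are not virtually abelian can satisfy laws. Take the lamplighter group $G=(\Z/2\Z)\wr\Z$: it is finitely generated, residually finite (hence MAP), not virtually abelian, metabelian, and centerless. Being metabelian, every word in the second derived subgroup of the free group --- which is exactly where your doubly nested commutators $[w,w']$ live --- evaluates identically to $e_G$ for \emph{every} choice of inputs, and your determinant/center shortcut is unavailable because the center is trivial. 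So for such groups your scheme produces no nontrivial elements at all. What is needed is a dichotomy that your sketch does not contain: either some representation of dimension $<N$ has a non-virtually-abelian image, in which case the Tits alternative (plus the fact that virtually solvable subgroups of $\U(n)$ are virtually abelian, their closures being compact groups whose identity component is a torus) gives a free subgroup inside a unitary image, and the almost-law words evaluated at preimages of its generators are then certified nontrivial in $G$; or every such image is virtually abelian, in which case a Jordan--Mal'cev type bound (an abelian subgroup of index at most $C(N)$ in every virtually solvable subgroup of $\U(n)$, $n<N$), together with the fact that a finitely generated group has only finitely many subgroups of index $\le C(N)$, yields a finite-index subgroup $H\le G$ such that $[H,H]$ lies in the kernel of every representation of dimension $<N$; any nontrivial commutator of $H$ (which exists since $G$ is not virtually abelian) then witnesses non-discreteness exactly. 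Without this case analysis the hard direction remains unproved.
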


We do not know in general which discrete groups are dual topological groups.
But we notice that there are infinite discrete groups which are dual topological groups (see \cite{hernwu06}).
We denote by $exp(G)$ the exponent of a group $G$.
\begin{Theorem}\label{co39}
Let $G=\sum\limits_{i\in I}F_i$, where $F_i$ is a finite simple
non-abelian group for each $i\in I$. Then the discrete group $G$ is dual if
and only if the set $\{exp(F_i) : i\in I \}$ is bounded.
\end{Theorem}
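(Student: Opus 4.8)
The plan is to reduce the statement to a purely group-theoretic dichotomy about the family $\{F_i\}$ and then to feed that dichotomy into two classical finiteness theorems. Recall from the discussion above that the discrete group $G$ is a dual topological group precisely when the finest dually induced topology $t(\mathcal{B}^+)$ --- the one attached to the biggest bornology $\mathcal{B}^+$, whose members are the subsets of $\rep(G_d)$ of bounded dimension --- coincides with the discrete topology; equivalently, when there exist $N<\omega$ and $\epsilon>0$ with $[\rep_{\le N}(G_d),\epsilon]=\{e_G\}$, where $\rep_{\le N}(G_d)=\bigcup_{n\le N}\rep_n(G_d)$. So the whole problem is to decide for which families $\{F_i\}$ such a uniform cut-off $(N,\epsilon)$ exists.

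First I would record the representation-theoretic input. Since $G=\sum_{i\in I}F_i$ is a restricted sum, any finite-dimensional $\varphi\in\rep(G_d)$ restricts on each factor to a representation $\varphi|_{F_i}$; because $F_i$ is simple and non-abelian, $\varphi|_{F_i}$ is either trivial or faithful, and a faithful representation has dimension at least $d_i$, the minimal dimension of a non-trivial representation of $F_i$ (note $d_i\ge 2$, as $F_i$ is perfect). Consequently a representation of dimension $\le N$ is trivial on every factor with $d_i>N$, while for a factor with $d_i\le N$ a chosen faithful irreducible $\rho_i$ (composed with the projection $\pi_i\colon G\to F_i$) detects every non-identity $h\in F_i$, with gap $\|\rho_i(h)-I\|\ge 2\sin(\pi/\mathrm{ord}(h))\ge 2\sin(\pi/\exp(F_i))$.

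From this the topological step is immediate in both directions. If $\{d_i\}$ is unbounded, then for each $N$ there is a factor $F_{i_0}$ with $d_{i_0}>N$ on which every $\varphi\in\rep_{\le N}(G_d)$ is trivial; hence every non-identity $h\in F_{i_0}\le G$ lies in $[\rep_{\le N}(G_d),\epsilon]$ for all $\epsilon$, so $t(\mathcal{B}^+)$ is not discrete. If instead $\sup_i d_i=N<\infty$, then I will show (using the two theorems below) that $\sup_i\exp(F_i)=:m<\infty$; taking $\epsilon<2\sin(\pi/m)$, the coordinatewise estimate above forces $g=e_G$ for every $g\in[\rep_{\le N}(G_d),\epsilon]$, so this set equals $\{e_G\}$ and $t(\mathcal{B}^+)$ is discrete. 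Thus $G$ is dual if and only if $\{d_i\}$ is bounded.

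It remains to prove the equivalence $\sup_i d_i<\infty \iff \sup_i\exp(F_i)<\infty$, and this is where the two non-trivial inputs enter; I expect this to be the main obstacle, since each implication rests on a deep theorem. For the forward implication I would invoke the Jordan--Schur theorem: a faithful representation of dimension $\le N$ embeds $F_i$ into $\mathrm{GL}(N,\C)$, which therefore has an abelian normal subgroup of index $\le J(N)$; simplicity forces that subgroup to be trivial, so $|F_i|\le J(N)$ and hence $\exp(F_i)\le|F_i|\le J(N)$. For the converse I would invoke the positive solution of the restricted Burnside problem (Zelmanov) together with the fact that every finite simple group is $2$-generated: if $\exp(F_i)\le m$ for all $i$, then the $F_i$ are $2$-generated groups of exponent dividing $\lcm(1,\dots,m)$ and so have order bounded by a constant $B(m)$. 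In either case the family $\{F_i\}$ realizes only finitely many isomorphism types, whence $d_i$, the irreducible dimensions, and the exponents are all simultaneously bounded, closing the loop. (One may replace Zelmanov's theorem by a direct appeal to the classification of finite simple groups, along which the exponent is unbounded in each infinite family.)
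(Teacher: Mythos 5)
You should know at the outset that the paper contains no proof of this statement: Theorem \ref{co39} is imported from Hern\'andez--Wu \cite{hernwu06} as a quoted result, so there is no internal argument to compare yours against, and your proposal has to stand on its own. It does, and it is the natural proof inside this paper's framework. Your opening reduction --- $G_d$ is a dual topological group if and only if $t(\mathcal B^+)$ is discrete, if and only if there exist $N<\omega$ and $\epsilon>0$ with $[\bigcup_{n\le N}\rep_n(G_d),\epsilon]=\{e_G\}$ --- is exactly the mechanism the paper itself runs on: it is licensed by the paper's assertion that the biggest bornology $\mathcal B^+$ induces the largest dually induced topology, and it is the same device the paper uses to discuss Moran's example. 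The core dichotomy through the minimal faithful dimensions $d_i$ is sound: simplicity makes each restriction $\varphi|_{F_i}$ trivial or faithful, so representations of dimension $\le N$ annihilate every factor with $d_i>N$ (whence no dually induced topology is discrete when $\sup_i d_i=\infty$), while the spectral estimate $\|\rho_i(h)-I\|\ge 2\sin(\pi/exp(F_i))$ produces the required uniform $\epsilon$ when $\sup_i d_i<\infty$ and the exponents are bounded. The bridge $\sup_i d_i<\infty\iff\sup_i exp(F_i)<\infty$ is also correctly settled: Jordan's theorem forces $|F_i|\le J(N)$ (the abelian normal subgroup must be trivial by simplicity and non-commutativity), and Zelmanov's restricted Burnside theorem together with $2$-generation of finite simple groups bounds $|F_i|$ once the exponents are bounded. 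Two citation-level remarks, neither of which is a gap: the $2$-generation of finite simple groups is itself a consequence of the classification, so your closing parenthesis does not actually offer a classification-free alternative --- that direction rests on CFSG in either variant; and in your Jordan step the clause ``which therefore has an abelian normal subgroup of index $\le J(N)$'' should attach to the embedded finite group $F_i$, not to $\mathrm{GL}(N,\C)$.
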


The examples above show that it is necessary to consider infinite
dimensional representations if one wants to apply duality methods
for \emph{all} locally compact groups. Our methods here only deal with
groups whose topology may be understood using finite dimensional
representations.

We will now look at several methods of defining  dually induced
topologies. In the sequel, $G$ is a MAP group, $\Gamma$
a representation space for $G$ and $\mathcal B$ is a bornology
on $\Gamma$.
For any cardinal number $\kappa$, we define $\mathcal{B}_{\kappa}$
to be the ideal of subsets generated by the collection
$\{F\in \mathcal B : \W(F_{t(Fin(G))})\leq\kappa
\}.$
It is readily seen that $\mathcal{B}_{\kappa}$ is a bornology on
$\Gamma$. % and the symbol $G_{\kappa}$ will denote the group $G$
%equipped with the topology $t(\mathcal{B}_{\kappa})$.
 As a
consequence of Theorem \ref{th_alfa_peso}, we obtain:

\begin{Proposition}\label{pr_lindelof}
$G_{t(\mathcal{B}_\kappa)}$ is $\kappa$-narrow.
\end{Proposition}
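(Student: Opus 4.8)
The plan is to recognize Proposition \ref{pr_lindelof} as a direct application of Theorem \ref{th_alfa_peso}, once the group topology $t(\mathcal{B}_\kappa)$ has been translated into the language of uniform convergence. First I would put $X=G$ (as a bare set), $M=\U=\sqcup_{n<\omega}\U(n)$, and regard the representation space $\Gamma$ as a subset of $M^X$ via $\varphi\mapsto(g\mapsto\varphi(g))$. Since each $\U(n)$ is a compact metrizable group and $\U$ is their countable topological sum, $\W(M)=\aleph_0\le\kappa$, so the metrizability and weight hypotheses of Theorem \ref{th_alfa_peso} are in force. The crucial identification is that the right uniformity of $G_{t(\mathcal{B}_\kappa)}$ coincides with $\mu_{\mathcal{B}_\kappa}=\sup\{\mu_F:F\in\mathcal{B}_\kappa\}$: indeed, for any $\varphi\in\Gamma_n$ one has $\|\varphi(xy^{-1})-I_n\|=\|\varphi(x)\varphi(y)^{-1}-I_n\|=d(\varphi(x),\varphi(y))$, so the basic neighbourhood $[P,\epsilon]$ generates precisely the band $\{(x,y):d(\varphi(x),\varphi(y))\le\epsilon\ \forall\varphi\in P\}$ defining $\mu_P$. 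Hence $G_{t(\mathcal{B}_\kappa)}$, viewed as a uniform space, is exactly $\mu_{\mathcal{B}_\kappa}X$, and $\kappa$-narrowness of the group is equivalent to $\kappa$-narrowness of this uniform space (a cover by right translates $\{xU:x\in G\}$ is $\mu$-uniform, and a subcover of cardinality $\le\kappa$ is precisely a set $F$ with $|F|\le\kappa$ and $FU=G$).

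Next I would check the two hypotheses of Theorem \ref{th_alfa_peso} for the bornology $\mathcal{B}_\kappa$. Every $P\in\mathcal{B}_\kappa$ satisfies $P=\bigcup_{n<N_P}P_n$ with $P_n\subseteq\Gamma_n$, so for each fixed $g$ the set $\{\varphi(g):\varphi\in P\}$ lies in the finite union $\bigcup_{n<N_P}\U(n)$, which is compact in $\U$; thus every member of $\mathcal{B}_\kappa$ is pointwise relatively compact, as required. It then remains only to verify the weight condition $\W(F)\le\kappa$ for every $F\in\mathcal{B}_\kappa$, where $\W(F)$ is computed in the topology of pointwise convergence inherited from $M^X$.

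For the weight condition I would argue as follows. By construction $\mathcal{B}_\kappa$ is the ideal generated by those $F\in\mathcal B$ with $\W(F_{t(Fin(G))})\le\kappa$, and the topology on $F_{t(Fin(G))}$ is precisely the topology of pointwise convergence used in Theorem \ref{th_alfa_peso}, since pointwise convergence of representations does not depend on the topology placed on the domain $G$. Because weight is monotone under passage to subspaces and subadditive on finite unions, and $\kappa+\kappa=\kappa$ for infinite $\kappa$, every member of the generated ideal again has pointwise weight $\le\kappa$. Therefore $\W(F)\le\kappa$ for all $F\in\mathcal{B}_\kappa$, and the implication ($\Leftarrow$) of Theorem \ref{th_alfa_peso} yields that $\mu_{\mathcal{B}_\kappa}X$ is $\kappa$-narrow. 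Combined with the identification of the first paragraph, this gives that $G_{t(\mathcal{B}_\kappa)}$ is $\kappa$-narrow.

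The step I expect to be the main obstacle is the bookkeeping of the first paragraph: making precise that the group topology $t(\mathcal{B}_\kappa)$ together with its right uniformity agrees with the uniform-convergence uniformity $\mu_{\mathcal{B}_\kappa}$ on the underlying set $G$, and that $\kappa$-narrowness is preserved in both directions of this identification. Once that correspondence is nailed down, the remainder is a routine check of the hypotheses of Theorem \ref{th_alfa_peso} against the definition of $\mathcal{B}_\kappa$.
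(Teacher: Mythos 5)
Your overall strategy coincides with the paper's: the paper proves Proposition \ref{pr_lindelof} simply by invoking Theorem \ref{th_alfa_peso}, and your first two paragraphs supply exactly the bookkeeping that this invocation requires --- the identification of the right uniformity of $G_{t(\mathcal B_\kappa)}$ with $\mu_{\mathcal B_\kappa}$ via $\|\varphi(xy^{-1})-I_n\|=d(\varphi(x),\varphi(y))$, the equivalence of group-theoretic and uniform $\kappa$-narrowness, and pointwise relative compactness of each $P\in\mathcal B_\kappa$ from the dimension bound $N_P$. Those parts are correct and are precisely what the paper leaves implicit.

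There is, however, a genuine gap in your third paragraph: the claim that weight is \emph{subadditive on finite unions} is false, and it is load-bearing, since you use it to pass from the generators of the ideal $\mathcal B_\kappa$ to all of its members (a black-box application of the equivalence in Theorem \ref{th_alfa_peso} does require $\W(P)\le\kappa$ for \emph{every} $P\in\mathcal B_\kappa$). Weight is monotone on subspaces, but not finitely subadditive: for a free ultrafilter $p\in\beta\omega\setminus\omega$, the space $\omega\cup\{p\}\subseteq\beta\omega$ is the union of a countable discrete subspace and a singleton, yet its weight equals the character of $p$, which is uncountable because no free ultrafilter on $\omega$ has a countable base. This pathology can even be realized inside the present setting: for $G=\Z^{(2^{\mathfrak c})}$ discrete, $Hom(G,\T)=\T^{2^{\mathfrak c}}$ contains a homeomorphic copy of $\beta\omega$ (since $\beta\omega$ embeds in $[0,1]^{2^{\mathfrak c}}$ and $[0,1]$ embeds in $\T$), so there are sets $F_1\cong\omega$ and $F_2=\{p\}$ of one-dimensional representations, each of pointwise weight $\le\aleph_0$, whose union has uncountable pointwise weight. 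The fix is to abandon the passage to arbitrary members of the ideal and note instead that it is not needed: if $P\subseteq F_1\cup\dots\cup F_k$ with each $F_i$ a generator, then $[P,\epsilon]\supseteq[F_1,\epsilon]\cap\dots\cap[F_k,\epsilon]$, so the topology $t(\mathcal B_\kappa)$, and hence the uniformity $\mu_{\mathcal B_\kappa}$, is already generated by the family $\{F\in\mathcal B:\W(F_{t(Fin(G))})\le\kappa\}$ alone. For each such generator $F$ the argument inside the proof of Theorem \ref{th_alfa_peso} (via Theorem \ref{th_npeso}) gives $\D(X_F)\le\kappa$, i.e.\ $\mu_F X$ is $\kappa$-narrow; then $\mu_{\mathcal B_\kappa}X$ embeds diagonally into the product of the $\mu_F X$ over the generators, which is $\kappa$-narrow by Proposition \ref{pro_pro}, and one concludes by the paper's earlier proposition that every uniform subspace of a $\kappa$-narrow uniform space is $\kappa$-narrow. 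With that substitution --- using the proof technique of Theorem \ref{th_alfa_peso} restricted to the generating family, rather than its statement applied to all of $\mathcal B_\kappa$ --- your argument becomes sound and matches what the paper intends.
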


Now, we are going to extend this approach in order to establish the existence of some dually
induced topologies associated to every topological group.

\begin{Theorem}
Let $G$ be a MAP group. Then the following assertion are
satisfied.

\begin{enumerate}

\item[(i)] For every cardinal $\kappa$, there is a largest, dually
induced, $\kappa$-narrow topology $t(\kappa)$ on
$G$. In particular, there is a largest dually induced topology on
$G$.

\item[(ii)] If $\tau$ is a group topology on $G$, then there is a
largest, dually induced, $\kappa$-narrow  topology
$\tau_{\kappa}$ that is included in $\tau$. In particular, there
is a largest dually induced topology that is included in $\tau$.
\end{enumerate}
\begin{proof} (i) It suffices to apply Proposition \ref{pr_lindelof}
to the representation space $\Gamma=\rep(G_d)$ and the largest
bornology $\mathcal B^+$ generated by the
collection $\{ \rep_n(G_d): n< \omega \}$. By Theorem \ref{th_alfa_peso}, the
topology $t(\mathcal B^+_\kappa)$ is $\kappa$-narrow and
furthermore it is the finest $\kappa$-narrow, dually induced, topology
on $G$. On the other hand, the biggest bornology  $\mathcal B^+$,
induces the topology  $t^+$  that is the largest dually induced topology on $G$.

(ii) Let $\kappa$ be a cardinal number and let $\Gamma_\tau=\rep(G_\tau)$.
On $\Gamma_\tau$ consider the bornology $\mathcal E(\Gamma_\tau)_\kappa$ generated by the collection
of all equicontinuous subsets $A$ of $(\Gamma_\tau)_n$ for some $n<\omega$ and
$\W(A_{t(Fin(G))})\leq\kappa$. These defines a dually induced topology $t(\mathcal E(\Gamma_\tau)_\kappa)$
on $G$ that is weaker than $\tau$.

On the other hand, the collection of
all equicontinuous subsets $A$ of $\Gamma_\tau$ such that
$A\subseteq (\Gamma_{\tau})_n$ for some $n<\omega$ defines a
bornology $\mathcal E(\Gamma_\tau)$ on $\Gamma_\tau$ such that the topology
$\tau^+$, induced by $\mathcal E(\Gamma_\tau)$ on $G$, is the largest dually
induced topology on $G$ that is weaker than $\tau$.
\end{proof}
\end{Theorem}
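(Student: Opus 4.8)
The plan is to exhibit each extremal topology as $t(\mathcal B)$ for an explicit bornology on the largest admissible representation space, and then to invoke Theorem~\ref{th_alfa_peso} to rule out any strictly finer dually induced topology of the required type. Throughout I identify the domain $X$ with $G$ and set $M=\U$, so that $\W(M)=\aleph_0\le\kappa$ for every infinite $\kappa$; a bornology $\mathcal B$ on a representation space $\Gamma\subseteq\rep(G_d)$ is then viewed as a family of subsets of $\U^G$, each of which is pointwise relatively compact because the groups $\U(n)$ are compact. Under this identification the uniformity underlying $t(\mathcal B)$ coincides with $\mu_{\mathcal B}=\sup\{\mu_F:F\in\mathcal B\}$, and Proposition~\ref{pr_top} guarantees that every such $t(\mathcal B)$ is a Hausdorff group topology.

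For (i), I would take $\Gamma=\rep(G_d)$ with the biggest bornology $\mathcal B^+$ and its weight-restricted sub-bornology $\mathcal B^+_\kappa$. Proposition~\ref{pr_lindelof} yields at once that $t(\kappa):=t(\mathcal B^+_\kappa)$ is $\kappa$-narrow, settling existence. For maximality, let $t(\mathcal B)$ be any dually induced $\kappa$-narrow topology. Then $\mu_{\mathcal B}G$ is $\kappa$-narrow, and since $\W(\U)\le\kappa$, Theorem~\ref{th_alfa_peso} forces $\W(F)\le\kappa$ for every $F\in\mathcal B$. As each member of $\mathcal B$ already sits inside $\mathcal B^+$, this gives $\mathcal B\subseteq\mathcal B^+_\kappa$, hence $t(\mathcal B)\subseteq t(\kappa)$, a larger bornology inducing a finer topology. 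Dropping the weight bound handles the ``in particular'' clause: every representation space embeds in $\rep(G_d)$ and every bornology on it is contained in $\mathcal B^+$, so $t^+:=t(\mathcal B^+)$ is the largest dually induced topology of all.

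For (ii) I would repeat the scheme relative to $\tau$, using the equicontinuous bornology $\mathcal E(\Gamma_\tau)$ on $\Gamma_\tau=\rep(G_\tau)$ and its weight-restricted piece $\mathcal E(\Gamma_\tau)_\kappa$. The inclusion $t(\mathcal E(\Gamma_\tau))\subseteq\tau$ recorded earlier keeps the resulting topologies below $\tau$, while Proposition~\ref{pr_lindelof} makes $\tau_\kappa:=t(\mathcal E(\Gamma_\tau)_\kappa)$ $\kappa$-narrow. The extra ingredient for maximality is that $t(\mathcal B)\subseteq\tau$ forces every $P\in\mathcal B_n$ to be $\tau$-equicontinuous: each $[P,\epsilon]$ is then a $\tau$-neighborhood of $e$, which is precisely equicontinuity at the identity, and hence everywhere by translation. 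Feeding this together with the weight bound from Theorem~\ref{th_alfa_peso} gives $\mathcal B\subseteq\mathcal E(\Gamma_\tau)_\kappa$ and therefore $t(\mathcal B)\subseteq\tau_\kappa$; removing the weight bound produces the largest dually induced topology below $\tau$.

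The main obstacle is the maximality half of each part, where the purely uniform hypothesis of $\kappa$-narrowness must be transformed into the combinatorial bound $\W(F)\le\kappa$ on the individual members of the bornology; this is exactly what Theorem~\ref{th_alfa_peso} supplies, once the dually induced uniformity is recognized as $\mu_{\mathcal B}$ and $\U$ is seen to have countable weight. In part (ii) one additionally has to verify that lying below $\tau$ is equivalent to equicontinuity of the members of the bornology, so that $\mathcal E(\Gamma_\tau)$ is genuinely the correct ceiling; beyond these points everything reduces to the monotonicity that a larger bornology induces a finer topology.
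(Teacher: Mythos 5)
Your proposal is correct and takes essentially the same approach as the paper: you use exactly the same bornologies ($\mathcal B^+$ and $\mathcal B^+_\kappa$ on $\rep(G_d)$ for (i), and $\mathcal E(\Gamma_\tau)$ and $\mathcal E(\Gamma_\tau)_\kappa$ for (ii)) together with the same appeals to Proposition~\ref{pr_lindelof} and Theorem~\ref{th_alfa_peso}. The only difference is one of detail: you spell out the maximality arguments that the paper merely asserts, namely using the converse direction of Theorem~\ref{th_alfa_peso} to turn $\kappa$-narrowness of a competing topology $t(\mathcal B)$ into the weight bound $\W(F)\leq\kappa$ on members of $\mathcal B$, and observing that $t(\mathcal B)\subseteq\tau$ forces every member of $\mathcal B$ to be $\tau$-equicontinuous, so that $\mathcal B$ lands inside the extremal bornology in each case.
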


We recall that if $G$ is an abelian topological group, the dual object $\widehat G$ is also a
topological group which coincides with $\hbox{irrep}(G)=\hbox{irrep}_1(G)=CHom(G,\T)$.

Let $G$ be an abelian group with no topology initially assumed on it.
Given a subgroup $\Gamma$ of $Hom(G,\T)$,
it is said that a topology $\tau$ on $G$ is \emph{compatible} with the pair $\langle G, \Gamma \rangle$
when $\widehat{G_\tau}=\Gamma$ algebraically.
In recent times, there is an interest in finding compatible topologies
for a given pair $\langle G, \Gamma \rangle$. We notice that if $\Gamma = \widehat{G_\tau}$ for some topology
$\tau$ on $G$, then $t(Fin(\Gamma))\subseteq t(\mathcal E(\Gamma)_\kappa)\subseteq \tau$
for each cardinal number $\kappa$.
Since the largest and the coarsest topologies in this chain have the same algebraic dual group,
it follows that the topologies $t(\mathcal E(\Gamma)_\kappa)$
are all compatible with $\langle G, \Gamma \rangle$.

Now, suppose that $G$ is a \v{C}ech-complete group (for instance, locally compact
or complete metrizable) and let $\Gamma=\widehat{G}$.
It is known that every pointwise compact subset of $\Gamma$ must be an equicontinuous on $G$
(see \cite{gal_her:fundamenta}).
Therefore, we obtain the following result.
\begin{Proposition}
Let $G$ be an abelian \v{C}ech-complete group and let $\Gamma=\widehat{G}$. If $\mathcal B$ is
the bornology of all pointwise compact subsets of $\Gamma$, then the dually induced
topologies $t(\mathcal B_\kappa)$ are compatible with $\langle G, \Gamma \rangle$ for all cardinal $\kappa$.
\end{Proposition}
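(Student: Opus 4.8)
The plan is to show that, under the \v{C}ech-completeness hypothesis, the bornology $\mathcal B$ of pointwise (relatively) compact subsets of $\Gamma$ coincides with the bornology $\mathcal E(\Gamma)$ of equicontinuous subsets of $\Gamma$, so that $\mathcal B_\kappa=\mathcal E(\Gamma)_\kappa$ and hence $t(\mathcal B_\kappa)=t(\mathcal E(\Gamma)_\kappa)$, and then to invoke the compatibility of the topologies $t(\mathcal E(\Gamma)_\kappa)$ already established in the discussion preceding the statement. Since $G$ is abelian and $\Gamma=\widehat G$, every member of $\Gamma$ is a continuous character, i.e. $\Gamma=\Gamma_1\subseteq\T^G$, so the clause ``$A\subseteq(\Gamma_\tau)_n$ for some $n<\omega$'' appearing in the definition of $\mathcal E(\Gamma)$ is automatically satisfied and plays no role here.

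First I would prove $\mathcal E(\Gamma)\subseteq\mathcal B$. Given an equicontinuous $A\subseteq\Gamma$, its pointwise closure $\ol A$ in the compact space $\T^G$ is again equicontinuous by the standard Ascoli argument (equicontinuity passes to pointwise limits); hence $\ol A$ consists of continuous characters, so $\ol A\subseteq\Gamma$, and being pointwise closed in the compact space $\T^G$ it is pointwise compact. Thus $A$ is contained in the pointwise compact subset $\ol A$ of $\Gamma$, which places $A$ in $\mathcal B$.

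Next I would prove the converse $\mathcal B\subseteq\mathcal E(\Gamma)$, and this is the step where \v{C}ech-completeness is essential. If $A\in\mathcal B$, then $A$ is contained in a pointwise compact subset $C\subseteq\Gamma$; by the result quoted from \cite{gal_her:fundamenta}, valid precisely because $G$ is \v{C}ech-complete, every pointwise compact subset of $\widehat G$ is equicontinuous, so $C$ is equicontinuous and therefore so is $A\subseteq C$. Hence $A\in\mathcal E(\Gamma)$. Combining the two inclusions gives $\mathcal B=\mathcal E(\Gamma)$ as families of subsets of $\Gamma$.

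Because the two bornologies agree, the generating collections $\{F\in\mathcal B:\W(F_{t(Fin(G))})\leq\kappa\}$ and $\{A\in\mathcal E(\Gamma):\W(A_{t(Fin(G))})\leq\kappa\}$ coincide, so $\mathcal B_\kappa=\mathcal E(\Gamma)_\kappa$ and hence $t(\mathcal B_\kappa)=t(\mathcal E(\Gamma)_\kappa)$ for every cardinal $\kappa$. Since $\Gamma=\widehat G=\widehat{G_\tau}$ for the given \v{C}ech-complete topology $\tau$ on $G$, the discussion preceding the statement shows that each $t(\mathcal E(\Gamma)_\kappa)$ is compatible with $\langle G,\Gamma\rangle$; transporting this through the identity $\mathcal B_\kappa=\mathcal E(\Gamma)_\kappa$ yields that every $t(\mathcal B_\kappa)$ is compatible with $\langle G,\Gamma\rangle$, as asserted. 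The main obstacle is the nontrivial inclusion $\mathcal B\subseteq\mathcal E(\Gamma)$, that is, that pointwise compactness forces equicontinuity; this is exactly where the \v{C}ech-complete hypothesis enters, and it is supplied by \cite{gal_her:fundamenta}.
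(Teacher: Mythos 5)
Your proof is correct and follows essentially the same route as the paper: in both, the key step is the cited fact from \cite{gal_her:fundamenta} that, for \v{C}ech-complete $G$, every pointwise compact subset of $\widehat G$ is equicontinuous, which reduces the statement to the compatibility of the topologies $t(\mathcal E(\Gamma)_\kappa)$ already established in the discussion preceding the Proposition. The only (harmless) difference is that you prove the full equality $\mathcal B=\mathcal E(\Gamma)$ via an extra Ascoli-type argument, whereas the paper needs only the inclusion $\mathcal B\subseteq\mathcal E(\Gamma)$ combined with the sandwich $t(Fin(\Gamma))\subseteq t(\mathcal B_\kappa)\subseteq\tau$ and the fact that the extreme topologies have the same dual group.
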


The latter results extend in a straightforward manner to non-abelian groups. In this case, it is said
that a topology $\tau$ on $G$ is \emph{compatible} with the representation space $\Gamma$
when $\hbox{rep}(G_\tau)=\Gamma$.

\section{Uniformities on the representation spaces}

In this section, we look at the uniformities on the dual object of a topological group that are associated to certain
bornologies defined on it. Let $G$ be a (topological) group and let $\mathcal B$ be a bornology  on (the set) $G$.
We say that $\mathcal B$ is a \emph{group bornology} on $G$ when satisfies
the following properties:

\begin{enumerate}
\item [(1)]if $A\in \mathcal B$ then $A^{-1}\in \sB$;

\item [(2)] if $A,B$ belong to $\mathcal B$ then $AB$ belongs
to $\sB$.
\end{enumerate}

\noindent In most cases  $\mathcal B$ means the collection of all
relatively compact (resp. finite, precompact) subsets of $G$.

Assume now that $\Gamma$ is a representation space for $G$ and
$\mathcal B$ is a group bornology on the group. Since the
groups $\mathbb U(n)$ are compact, it follows that
$\Ga_n\subseteq C(G,\U(n))\cap l^\infty (G,\U(n))$. Therefore, we may equip $\Ga _n$ with
the uniformity $\mu_{\sB}$ associated to $\sB$ as described in Section~2.
The representation space $\Ga$ is now equipped with the free
uniformity sum of the family $\{\mu_\sB\Ga_n: n<\omega\}$.
We denote by $\mu_\sB\Ga$ the set $\Ga$ equipped with this
uniformity. %In particular, when $\sB$ is the bornology of all
%relatively compact (resp. precompact) subsets of $G$, the expression $\mu_{\sB}$ will
%be replaced by $\mu_{\mathcal{K}(G)}$ (resp. $\mu_{\mathcal{PK}(G)}$) and the symbol
%$\mu_\mathcal{K}\Ga$ (resp. $\mu_\mathcal{PK}\Ga$) will denote the representation
%subspace $\Ga$ equipped with this uniformity.

The procedure introduced above allows us to define also a uniformity
on the dual space $\widehat G$ associated to a bornology $\mathcal B$
on $G$. Indeed, if $\mathcal B$ is a bornology on $G$, the symbol $\mu_\mathcal B \widehat G_n$
designates the set $\widehat G_n$ equipped with the final uniformity
induced by the canonical quotient  $Q_n:\mu_{\mathcal{B}}\irn \lra \widehat{G}_n$
and $\mu_\mathcal B\widehat G$ is the dual space equipped with the free
uniformity sum of the family $\{\mu_{\mathcal B}\widehat G_{n}: n<\omega\}$.
In particular, if $G$ is equipped with some group bornology $\sB$
and  $\kappa$ is a cardinal number, the family
$\mathcal{B}_{\kappa}:=\{K\subseteq \sB\,:\, \W(K_{t(Fin(\repg))})\leq\kappa\}$
defines a group bornology on $G$ and
$\mu_{\mathcal B_\kappa}\widehat G$ denotes the dual space $\widehat G$ equipped with the
uniformity $\mu_{\sB_\kappa}$.
The proof of the following result is a direct consequence of Theorem \ref{th_alfa_peso}.

\begin{Proposition}
$\mu_{\mathcal B_\kappa}\widehat G$ is $\kappa$-narrow.
\begin{proof} It is enough to observe that $\mu_{\mathcal B_\kappa}\repng$ is $\kappa$-narrow
for all $n<\omega$.
\end{proof}
\end{Proposition}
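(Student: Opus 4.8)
The plan is to reduce the statement, via the structure of $\mu_{\mathcal B_\kappa}\widehat G$ as a free uniformity sum of quotients, to a single $\kappa$-narrowness assertion about a function space to which Theorem~\ref{th_alfa_peso} applies directly, exactly as the one-line sketch suggests.

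First I would dismantle the dual object. By construction $\mu_{\mathcal B_\kappa}\widehat G$ is the free uniformity sum of the family $\{\mu_{\mathcal B_\kappa}\widehat G_n : n<\omega\}$, and a free uniformity sum of countably many $\kappa$-narrow uniform spaces is again $\kappa$-narrow whenever $\kappa\geq\aleph_0$: a uniform cover of the sum restricts to a uniform cover of each summand, and one splices the $\leq\kappa$ subcovers of the countably many summands into a subcover of total cardinality $\leq\kappa$. Thus it suffices to treat each $\widehat G_n$ separately. Next, $\widehat G_n$ carries the final uniformity induced by the canonical surjection $Q_n:\mu_{\mathcal B_\kappa}\irn\to\widehat G_n$; since $Q_n$ is uniformly continuous and onto, and since the preimage under a uniformly continuous map of a uniform cover is again uniform, any $\leq\kappa$ subcover upstairs pushes forward to a $\leq\kappa$ subcover downstairs. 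Hence $\widehat G_n$ is $\kappa$-narrow as soon as $\mu_{\mathcal B_\kappa}\irn$ is. Finally $\irn\subseteq\repng$ is a uniform subspace, so by the subspace proposition of Section~2 it is enough to prove that $\mu_{\mathcal B_\kappa}\repng$ is $\kappa$-narrow for every $n<\omega$.

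For this core assertion I would apply Theorem~\ref{th_alfa_peso} with the roles of the group and of the representations interchanged. Take the index set to be $X=\repng$, the metrizable target to be $M=\U(n)$ (so that $\W(M)=\aleph_0\leq\kappa$), and regard $G$ as a family of functions on $\repng$ through the evaluation map $\mathfrak e_n(g)(\varphi)=\varphi(g)$, carrying the bornology $\mathcal B_\kappa$. Because $\U(n)^{\repng}$ is compact by Tychonoff's theorem, every member of $\mathcal B_\kappa$ is automatically pointwise relatively compact, so the hypotheses of Theorem~\ref{th_alfa_peso} are met; moreover the uniformity $\mu_{\mathcal B_\kappa}$ of uniform convergence on the members of $\mathcal B_\kappa$ is precisely $\sup\{\mu_F : F\in\mathcal B_\kappa\}$ on $\repng$. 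The theorem then yields that $\mu_{\mathcal B_\kappa}\repng$ is $\kappa$-narrow if and only if $\W(\mathfrak e_n(K))\leq\kappa$ for every $K\in\mathcal B_\kappa$.

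It remains to match this weight condition with the defining property of $\mathcal B_\kappa$, and this is the step I expect to require the most care. The pointwise topology on $\mathfrak e_n(K)$ is the topology induced on $K$ by the $n$-dimensional representations alone, which is coarser than the restriction of $t(Fin(\repg))$; passing to weights — using that network weight does not increase under the continuous surjection $\mathfrak e_n$, and that on the relevant pointwise-compact closures weight and network weight agree — one obtains $\W(\mathfrak e_n(K))\leq \W(K_{t(Fin(\repg))})\leq\kappa$, which holds by the very definition of $\mathcal B_\kappa$. The delicate points are therefore (i) the bookkeeping of the role-swap, so that ``uniform convergence on $\mathcal B_\kappa$'' on the function space $\repng$ is correctly identified with the supremum uniformity of Theorem~\ref{th_alfa_peso}, and (ii) the verification that the weight of the evaluation image in the pointwise topology is genuinely controlled by $\W(K_{t(Fin(\repg))})$ and not merely by its density, which is exactly where the compactness and equicontinuity underlying Theorem~\ref{th_npeso} have to be invoked.
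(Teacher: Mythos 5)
Your reductions are exactly the ones the paper's one-line proof leaves implicit, and they are all correct: the free uniform sum of countably many $\kappa$-narrow spaces is $\kappa$-narrow for infinite $\kappa$; $\kappa$-narrowness passes along the uniformly continuous surjection $Q_n$ to the final uniformity on $\widehat G_n$; $\irn$ is a uniform subspace of $\repng$, so the subspace Proposition of Section~2 applies; and the role-swapped application of Theorem~\ref{th_alfa_peso}, with $X=\repng$, $M=\U(n)$, the group $G$ viewed inside $\U(n)^{\repng}$ through $\mathfrak e_n$, and $\mu_{\mathcal B_\kappa}=\sup\{\mu_K : K\in \mathcal B_\kappa\}$, is precisely the reading the paper intends when it calls the Proposition a direct consequence of that theorem. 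Up to that point your proof and the paper's coincide.

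The gap is in your step (ii), the weight matching, and it is genuine, not bookkeeping. What makes the sufficiency direction of Theorem~\ref{th_alfa_peso} work is Theorem~\ref{th_npeso}(b), whose hypothesis is a bound on the weight of the \emph{compact closure} $cl_{M^X}F$; so the quantity you must bound is $\W(cl_{\U(n)^{\repng}}\,\mathfrak e_n(K))$, not $\W(\mathfrak e_n(K))$. Your argument bounds only the network weight of $\mathfrak e_n(K)$ itself, and neither the weight nor the network weight of a dense set controls the weight of its closure: $\NN$ is dense in $\beta\NN$ with countable network weight, while $\W(\beta\NN)=2^{\cc}$. The clause ``on the relevant pointwise-compact closures weight and network weight agree'' is true but does not help, because the closure's network weight is exactly what you have no handle on. Worse, under the literal reading of $\mathcal B_\kappa$ (weight of the subspace $K_{t(Fin(\repg))}$) the inequality you want is simply false, and the $\kappa$-narrowness conclusion fails with it. Take $G=\bigoplus_{i<\cc}\Z(2)$, either discrete with $\mathcal B$ the bornology of all subsets, or carrying the totally bounded topology $t(Fin(\rep(G_d)))$ with $\mathcal B$ the precompact sets; let $\kappa=\cc$ and let $K=\{e_i : i<\cc\}$ be the set of canonical generators. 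Every $\{\pm 1\}$-valued function on $K$ extends to a character of $G$, so $K$ is \emph{discrete} in $t(Fin(\repg))$; hence $\W(K_{t(Fin(\repg))})=\cc$ and $K\in\mathcal B_\kappa$. But two distinct characters already differ by $2$ at some point of $K$, so $\rep_1(G)\cong\{\pm 1\}^{\cc}$ is $d_K$-uniformly discrete of cardinality $2^{\cc}$, and $\mu_{\mathcal B_\kappa}\rep_1(G)$ (hence also $\mu_{\mathcal B_\kappa}\widehat G_1$ and $\mu_{\mathcal B_\kappa}\widehat G$) is not $\cc$-narrow; here the pointwise closure of $\mathfrak e_1(K)$ is a copy of the Stone--\v{C}ech compactification of a discrete set of size $\cc$, of weight $2^{\cc}$. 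The argument, and the Proposition, are sound exactly when the defining condition of $\mathcal B_\kappa$ controls closures rather than the sets themselves, for instance when the members of $\mathcal B$ are compact in $t(Fin(\repg))$, as happens in the paper's later applications with $\mathcal B=\mathcal K(G)$: then $\mathfrak e_n(K)$ is itself compact, weight and network weight agree on it, and your chain of inequalities closes. So your instinct that (ii) was the delicate point was right, but the resolution you propose does not go through as written.
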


\section{Determined groups}
This section is dedicated to apply the results obtained previously
to a question that has attracted the attention of workers interested in
duality theory. Let us assume for the moment that $G$ is an abelian group so that
$\widehat G=\hbox{irrep}(G)=\hbox{irrep}_1(G)$ is also a topological group.
When $D$ is a dense subgroup of a topological abelian group $G$,
then the restriction homomorphism $R|_{D}:\widehat{G}\to \widehat{D}$ of
the dual groups is a continuous isomorphism, but need not be a
topological isomorphism. According to Comfort, Raczkowski and
Trigos-Arrieta \cite{comractri:04}, a (dense) subgroup $D$
of a topological abelian group $G$ {\em determines} $G$ if the
homomorphism $R|_{D}:\widehat{G}\to \widehat{D}$ is a topological
isomorphism. If every dense subgroup of $G $ determines it, then
$G$ is called {\em determined}. The cornerstone in this topic is the
following theorem due to  Aussenhofer \cite{aus} and, independently,
Chasco~\cite{Chasco}:

\begin{Theorem}[Au$\ss$enhofer, Chasco]\label{CA}
A metrizable abelian group  $G$ is  determined.
\end{Theorem}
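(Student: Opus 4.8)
The plan is to fix an arbitrary dense subgroup $D\le G$ and analyse the restriction homomorphism $R=R|_{D}:\widehat G\to\widehat D$, where $\widehat G=CHom(G,\T)$ and $\widehat D=CHom(D,\T)$ are both carried with the compact-open topology $\mu_{\mathcal K}$. First I would dispose of the purely algebraic content. The map $R$ is injective because $D$ is dense and a continuous character killing $D$ kills $G$; it is continuous because every compact subset of $D$ is compact in $G$, so the defining neighborhoods of $\widehat D$ pull back to neighborhoods of $\widehat G$. For surjectivity I would pass to the completion $\widetilde G$: since $G$ is metrizable, $\widetilde G$ is a complete metrizable group in which $D$ is dense, so every $\psi\in\widehat D$ is uniformly continuous and extends to $\widetilde\psi\in\widehat{\widetilde G}$, and then $\widetilde\psi|_{G}\in\widehat G$ restricts back to $\psi$. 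Thus $R$ is a continuous algebraic isomorphism, and the whole problem reduces to showing that $R$ is open.

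The core of the argument is an equicontinuity transfer: I would prove that a set $E\subseteq\widehat G$ is equicontinuous at $e_G$ if and only if $R(E)\subseteq\widehat D$ is equicontinuous at $e_D$. The forward direction is immediate, since a neighborhood $U$ of $e_G$ witnessing equicontinuity of $E$ yields the trace $U\cap D$ witnessing it for $R(E)$. For the converse I would use density: if $\chi(U\cap D)\subseteq V$ for all $\chi\in E$, then by continuity of each character $\chi(\cl_{G}(U\cap D))\subseteq\cl_{\T}V$, and $\cl_{G}(U\cap D)$ absorbs a neighborhood of $e_G$ because $D$ is dense, giving equicontinuity of $E$ after a harmless enlargement of $V$.

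Next I would invoke Ascoli's theorem, which applies because the metrizable group $G$ (and likewise $D$) is a $k$-space and $\T$ is compact: a subset of $\widehat G$ is relatively compact for $\mu_{\mathcal K}$ if and only if it is equicontinuous, and on such a set the compact-open topology coincides with the topology of pointwise convergence. Combining this with the previous paragraph, $R$ carries the relatively compact subsets of $\widehat G$ onto those of $\widehat D$ and back; moreover, on an equicontinuous set two characters agreeing on the dense set $D$ agree on $G$, so pointwise convergence on $G$ and on $D$ coincide there. Hence $R$ restricts to a homeomorphism on every compact subset of $\widehat G$, onto a compact subset of $\widehat D$.

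The hard part will be to upgrade this compactwise homeomorphism to a global one, and this is exactly where metrizability of $G$ is indispensable. The key structural fact I would establish (or cite) is that for metrizable $G$ the dual $\widehat G$ is a $k$-space, so that its compact-open topology is the final topology determined by the inclusions of its compact subsets, and the same holds for $\widehat D$. Given a continuous bijection $R$ that maps the compact subsets of $\widehat G$ homeomorphically onto those of $\widehat D$, the $k$-space property forces $R^{-1}$ to be continuous, so $R$ is a topological isomorphism. Since $D$ was an arbitrary dense subgroup, $G$ is determined, completing the proof.
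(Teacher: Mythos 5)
You should first be aware that the paper itself does not prove Theorem \ref{CA}; it quotes it from Au{\ss}enhofer \cite{aus} and Chasco \cite{Chasco}. So your proposal has to stand on its own, and its overall architecture is in fact the right one (it is essentially Chasco's): $R$ is a continuous algebraic isomorphism; compact subsets of the duals should correspond to equicontinuous families; equicontinuity at the identity transfers between $G$ and a dense subgroup $D$; and the $k$-space property of $\widehat{D}$ upgrades continuity of $R^{-1}$ on compacta to continuity. Steps one, two and four of your outline are correct as written.

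The genuine gap is the Ascoli step. Ascoli's theorem for a $k$-space $X$ characterizes the relatively compact subsets of $C(X,\T)$ in the compact-open topology as the pointwise relatively compact families that are equicontinuous \emph{on each compact subset of} $X$, i.e.\ whose restrictions to each compact $K\subseteq X$ are equicontinuous as functions on $K$. It does \emph{not} give equicontinuity at $e_X$ as a family of characters on $X$, which is what your transfer lemma consumes. The difference is not pedantic; it is exactly where the content of the theorem sits. Indeed, if ``compact $\Rightarrow$ equicontinuous at the identity'' held in $\widehat{D}$ whenever $D$ is a $k$-space, your argument would show that every dense subgroup of every compact group determines it (compact groups and many of their dense subgroups, e.g.\ $\Sigma$-products, are $k$-spaces), contradicting the Comfort--Raczkowski--Trigos-Arrieta result \cite{comractri:04} quoted in this same section of the paper. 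The implication you need is true, but it must be proved from first countability, not from Ascoli: if $K\subseteq\widehat{D}$ is compact but not equicontinuous, fix a decreasing neighborhood base $(U_n)$ of $e_D$ and pick $\chi_n\in K$, $x_n\in U_n$ with $|\chi_n(x_n)-1|\geq\sqrt{2}$; then $S=\{x_n\}\cup\{e_D\}$ is compact, a cluster point $\chi\in K$ of $(\chi_n)$ satisfies $\chi(x_n)\to 1$ by continuity, and approximating $\chi$ by $\chi_n$ uniformly on $S$ yields a contradiction. This short argument is one of the two places where metrizability enters essentially. The second place is your ``key structural fact'' that $\widehat{D}$ is a $k$-space for metrizable $D$: this is precisely the Banach--Dieudonn\'e--type theorem proved by Chasco in the very paper the result is attributed to (it does admit a direct proof, via a polar-exhaustion construction, independent of the dense-subgroup theorem, so citing it is not circular --- but it is the deepest ingredient and cannot be waved at as a routine fact). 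With the Ascoli step replaced by the null-sequence argument above, your outline becomes a correct proof.
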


Comfort, Raczkowski and and Trigos-Arrieta \cite{comractri:04} noticed
that this theorem fails for non-metrizable groups $G$ even when
$G$ is compact. More precisely, they proved that every
non-metrizable compact  group $G$ of weight $\geq \frak c$  contains
a dense subgroup that does not determine $G$. Hence, under the
assumption of the continuum hypothesis, every determined compact
group $G$ is metrizable. Subsequently, it was shown in \cite{hermactri}
that the result also holds without assuming the continuum hypothesis.
Furthermore, Dikranjan and Shakmatov
\cite{dik_sha:jmaa} proved that, for a compact abelian group $G$,
no subgroup with cardinality
smaller than $\W(G)$ may determine it. However, this result does not extend to non-compact
groups.

\begin{Example}
Let $G=\R^+$ be the group of reals equipped with the Bohr topology; that is, the topology
of pointwise convergence on the elements of the dual group that, incidently,
coincides with the group $\R$ equipped with the standard topology. We have the following canonical isomorphisms

$$\widehat{ \R^+}\cong \widehat \R\cong \R\cong \widehat \Q\cong
\widehat{\Q^+}.$$

This means that the dense subgroup of rational numbers determine $G$ in spite that $$\W(\R^+)=|\R|=\frak c\gneq |\Q|.$$
\end{Example}

Separately, in  \cite{lukacs:determined}, Luk\'acs considered the first approach
to non-abelian determined groups. He extended the Au{\ss}enhofer-Chasco
theorem mentioned above by proving the following: if $G$ is a metrizable group, $H$ is a dense subgroup,
and $K$ is a compact Lie group, then the spaces $CHom(G,K)_{\mathcal K(G)}$ and $CHom(H,K)_{\mathcal K(H)}$
are homeomorphic. Our approach to this question is different.

Suppose that $G$ is a compact metrizable abelian group and $H$ is dense in $G$.
From Au$\ss$enhofer-Chasco theorem, we deduce the existence of a compact subset
$K$ of $H$ such that $$K^\rhd=\{\chi\in \widehat G : \|\chi (x)-1\|\leq\sqrt 2\,\,\forall x\in K \}=\{e\}.$$
This means that for all $\chi, \rho$ in $\widehat G$ such that
$$\sup \{ |\chi(x)-\rho(x)| :\ \forall x\in K \}\leq \sqrt 2$$
we have $\chi=\rho$.

Therefore, a single compact subset $K$ of $H$ equips $\widehat G$ with the discrete topology
and the restriction mapping $$R|_{H}:\widehat G\longrightarrow \widehat H$$
is not only a homeomorphism but a \emph{topological isomorphism}.
Our goal is to extend this  result for non-abelian groups. Notwithstanding this,
we notice that the results of this section are also new for abelian groups.

Let $G$ be a topological group %and let $H$ be a dense subgroup of $G$,
and let $\mu_{\mathcal K}\ir$ denote  the set $\ir$ equipped with
the uniformity generated by the bornology $\mathcal K(G)$ on $G$. If
$\pi : \ir \longrightarrow \widehat G$ is the canonical quotient mapping,
then $\mu_{\mathcal K}\widehat G$ designates the dual object $\widehat G$
equipped with the final uniformity $\pi[\mu_{\mathcal K}]$.
\begin{Definition}
{\rm We say that a subgroup $H$ of a group $G$ \emph{determines} $G$ when the restriction
mapping $R|_{H}: \mu_{\mathcal K}\widehat G \longrightarrow \mu_{\mathcal K}\widehat H$
is an isomorphism of uniform spaces. $G$ is \emph{determined} if every dense subgroup of $G$ determines $G$.}
\end{Definition}

The main result we are concerned with here is the following:\bigskip

%\begin{Theorem}\label{MainTheorem}
\emph{A compact group is determined if and only if is metrizable\bigskip}
%\end{Theorem}

The proof of this result (as we have approached it) lies on the structure
theory of compact groups and some intricate properties of their continuous representations
(sufficiency), and the methods developed in the previous sections (necessity).
Next we are going to introduce the basic definitions for non-abelian groups and
we will prove that every determined compact group is metrizable. We leave
the proof that every metrizable compact group is determined
for a subsequent paper \cite{ferher:ii}.

In the sequel, we are going to present some necessary conditions for a group to be determined.
Previously, we need the following definition.

\begin{Definition}
The \emph{compact weight} $\W_k(X)$ of a topological space $X$ is
the cardinal number
 $\W_k(X)=\sup\{\W(K)\,:\,K\text{ is compact in }G\}$
\end{Definition}

\begin{Proposition}\label{pro_determined}
Let $G$ be a topological group. If $H$ is a dense subgroup of $G$
such that $|H|\leq\kappa$, then
$\mu_{\mathcal K}\widehat H$ is $\kappa$-narrow.
\end{Proposition}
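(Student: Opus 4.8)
The plan is to descend to the level of the spaces $\mu_{\mathcal K}\textrm{irrep}_n(H)$ and then invoke Theorem \ref{th_alfa_peso} after interchanging the roles of the group and its representations. Recall that $\mu_{\mathcal K}\widehat H$ is the free uniform sum of the quotients $\mu_{\mathcal K}\widehat H_n=Q_n[\mu_{\mathcal K}\textrm{irrep}_n(H)]$, $n<\omega$. The property of being $\kappa$-narrow passes to images under uniformly continuous surjections: pulling a $\mu_{\mathcal K}\widehat H_n$-uniform cover back along the uniformly continuous quotient $Q_n$, extracting a subcover of cardinality $\le\kappa$ by narrowness, and pushing it forward by surjectivity yields a subcover of size $\le\kappa$. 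Moreover a free uniform sum of countably many $\kappa$-narrow spaces is again $\kappa$-narrow, since a uniform cover of the sum restricts to a uniform cover of each summand and $\aleph_0\cdot\kappa=\kappa$. Hence it suffices to prove that each $\mu_{\mathcal K}\textrm{irrep}_n(H)$ is $\kappa$-narrow.

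To obtain this I would set up the transpose picture required by Theorem \ref{th_alfa_peso}. Fix $n<\omega$, put $X=\textrm{irrep}_n(H)$ and $M=\U(n)$, and embed $H$ into $M^X=\U(n)^{\textrm{irrep}_n(H)}$ via the evaluation $h\mapsto\widehat h$, where $\widehat h(\varphi)=\varphi(h)$ (this realization of $H$ is faithful because $H$ is MAP). Let $\mathcal B=\{F_K:K\in\mathcal K(H)\}$ with $F_K=\{\widehat h:h\in K\}$. Since $\U(n)$ is compact, each $F_K$ is pointwise (relatively) compact in $\U(n)^X$, so $\mathcal B$ is a bornology of pointwise relatively compact sets as the theorem demands. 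A direct check shows that the pseudometric $d_{F_K}(\varphi,\psi)=\sup_{h\in K}d(\varphi(h),\psi(h))$ is precisely the pseudometric of uniform convergence on $K$, so that $\mu_{\mathcal B}=\sup\{\mu_{F_K}:K\in\mathcal K(H)\}$ agrees with the uniformity $\mu_{\mathcal K}$ on $\textrm{irrep}_n(H)$. Finally $\W(M)=\W(\U(n))=\aleph_0\le\kappa$, so every hypothesis of Theorem \ref{th_alfa_peso} is in place.

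The crux, and the step I expect to be the genuine obstacle, is the weight estimate $\W(F_K)\le\kappa$ for each $K\in\mathcal K(H)$, because this is the only point where the assumption $|H|\le\kappa$ is turned into a bound on weights. Here I would proceed as follows: for each $\varphi$ the coordinate map $h\mapsto\varphi(h)$ is continuous on $H$, hence the evaluation $e_n:H\to\U(n)^{\textrm{irrep}_n(H)}$ is continuous for the product topology, and so $F_K=e_n(K)$ is a continuous image of the compact set $K$ and is therefore a compact Hausdorff subspace of $\U(n)^{\textrm{irrep}_n(H)}$. For compact Hausdorff spaces weight coincides with network weight, and the network weight never exceeds the cardinality (the singletons form a network); consequently $\W(F_K)=nw(F_K)\le|F_K|\le|K|\le|H|\le\kappa$.

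With this estimate established, Theorem \ref{th_alfa_peso} gives that $\mu_{\mathcal K}\textrm{irrep}_n(H)=\mu_{\mathcal B}X$ is $\kappa$-narrow for every $n<\omega$, and the reduction of the first paragraph then delivers that $\mu_{\mathcal K}\widehat H$ is $\kappa$-narrow, as required. The only mild caveat is that the argument tacitly uses $\kappa\ge\aleph_0$ (needed both for $\W(\U(n))\le\kappa$ and for the countable free sum); this is harmless, since the substantive instances have $\kappa$ infinite, while the finite case forces $G=H$ finite and is trivial.
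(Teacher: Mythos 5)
Your proof is correct and follows essentially the same route as the paper's: the paper likewise bounds $\W(C)\leq\kappa$ for compact $C\subseteq H$ via cardinality, applies Theorem \ref{th_alfa_peso} in the transposed picture (representations as the base set, group elements as the functions) to conclude that each $\mu_{\mathcal K}\textrm{irrep}_n(H)$ is $\kappa$-narrow, and then passes to the quotients $\mu_{\mathcal K}\widehat{H}_n$. Your write-up merely makes explicit the steps the paper leaves implicit, namely that $\kappa$-narrowness is preserved by uniformly continuous surjections and by countable free uniform sums, and that $\mu_{\mathcal B}$ coincides with $\mu_{\mathcal K}$ under the evaluation embedding.
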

\begin{proof} If $|H|\leq\kappa$, then $\W(C)\leq\kappa$ for all $C\in \mathcal K(H)$. This
means that $\mu_{\mathcal K}\hbox{irrep}_n(H)$ is $\kappa$-narrow for all $n<\omega$. Therefore
the quotient uniform space $\mu_{\mathcal K}\widehat H_n$ must also be $\kappa$-narrow
for all $n<\omega$. This completes the proof.

%On the other hand, since $\W_K(G)>\kappa$, . and this means that, for some natural number $n$,
%the uniform space $\mu_{\mathcal K(G)}\irn$ is not $\kappa$-narrow. This completes the proof.
\end{proof}

As a consequence we obtain

\begin{cor}\ Let $G$ be a topological abelian group. If $H$ is a dense subgroup of $G$
such that $\kappa=|H|<\W_k(G)$, then $H$ does not determine $G$.
\end{cor}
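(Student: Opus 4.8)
The plan is to argue by contradiction. Suppose $H$ \emph{determines} $G$, so that the restriction map $R|_{H}:\mu_{\mathcal K}\widehat G\to\mu_{\mathcal K}\widehat H$ is an isomorphism of uniform spaces. Since $|H|=\kappa$, Proposition~\ref{pro_determined} shows that $\mu_{\mathcal K}\widehat H$ is $\kappa$-narrow, and because $\kappa$-narrowness is a property of the uniform covers it is preserved by uniform isomorphisms; hence $\mu_{\mathcal K}\widehat G$ is $\kappa$-narrow as well. The whole argument then reduces to showing that the hypothesis $\W_k(G)>\kappa$ forces $\mu_{\mathcal K}\widehat G$ to \emph{fail} to be $\kappa$-narrow, which is the contradiction sought. (We may assume $\kappa$ infinite; otherwise $H$, a finite dense subgroup of the Hausdorff group $G$, equals $G$ and the hypothesis $\kappa<\W_k(G)$ is vacuous.)

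The key idea is to recognize $\mu_{\mathcal K}\widehat G$ as a uniformity of the type covered by Theorem~\ref{th_alfa_peso}, but with the roles of the group and its dual \emph{interchanged}. Since $G$ is abelian, $\widehat G=CHom(G,\T)$ and the quotient by conjugation is trivial on $\widehat G_1$, so $\mu_{\mathcal K}\widehat G$ is simply $CHom(G,\T)$ equipped with uniform convergence on the compact subsets of $G$. I would set $X=\widehat G$, take $M=\T$ (so that $\W(M)=\aleph_0\le\kappa$), and embed $G$ into $\T^{\widehat G}$ by the evaluation $x\mapsto\hat x$, where $\hat x(\chi)=\chi(x)$; put $Y=\{\hat x:x\in G\}$. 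Each coordinate map $x\mapsto\hat x(\chi)=\chi(x)$ is continuous, so $x\mapsto\hat x$ is continuous into the product (pointwise) topology, and for a compact $K\subseteq G$ the set $F_K=\{\hat x:x\in K\}$ is the continuous image of a compact space, hence pointwise compact. The family $\{F_K:K\in\mathcal K(G)\}$ generates a bornology $\mathcal B$ on $Y$ of pointwise relatively compact sets, and by construction the pseudometric $d_{F_K}(\chi,\rho)=\sup_{x\in K}|\chi(x)-\rho(x)|$ recovers exactly the bands $B(K,\epsilon)$ that define $\mu_{\mathcal K}$. Thus $\mu_{\mathcal B}\widehat G=\mu_{\mathcal K}\widehat G$.

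Theorem~\ref{th_alfa_peso} now applies and says that $\mu_{\mathcal K}\widehat G$ is $\kappa$-narrow if and only if $\W(F_K)\le\kappa$ for every compact $K\subseteq G$. The decisive computation is $\W(F_K)=\W(K)$: the map $x\mapsto\hat x$ is a continuous injection of $K$ into the Hausdorff space $\T^{\widehat G}$ — injective because the continuous characters of the Hausdorff MAP group $G$ separate its points — and a continuous bijection from a compact space onto a Hausdorff space is a homeomorphism, so $K$ and $F_K$ are homeomorphic. As weight is a topological invariant, $\W(F_K)=\W(K)$. Since $\W_k(G)>\kappa$, there is a compact $K_0\subseteq G$ with $\W(K_0)>\kappa$, whence $\W(F_{K_0})>\kappa$ and $\mu_{\mathcal K}\widehat G$ is not $\kappa$-narrow. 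This contradicts the first paragraph, so $H$ cannot determine $G$.

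I expect the main obstacle to be the correct identification of $\mu_{\mathcal K}\widehat G$ with a uniformity $\mu_{\mathcal B}$ on $\widehat G$ to which Theorem~\ref{th_alfa_peso} applies — that is, setting up the ``dual'' bornology $\mathcal B$ of images of compact subsets of $G$ inside $\T^{\widehat G}$, checking that its members are pointwise relatively compact, and matching the bands $B(K,\epsilon)$ — together with the invariance computation $\W(F_K)=\W(K)$. Everything else (stability of $\kappa$-narrowness under uniform isomorphism and the appeal to Proposition~\ref{pro_determined}) is routine.
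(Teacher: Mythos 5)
Your proof is correct and takes essentially the same route as the paper's: it combines Proposition~\ref{pro_determined} (giving $\kappa$-narrowness of $\mu_{\mathcal K}\widehat H$) with Theorem~\ref{th_alfa_peso} applied to $X=\widehat G$, $M=\T$ and the bornology of evaluation images $F_K$ of compact sets $K\subseteq G$ (giving that $\mu_{\mathcal K}\widehat G$ fails to be $\kappa$-narrow when $\kappa<\W_k(G)$), and then uses that $\kappa$-narrowness is a uniform invariant. The only difference is that you make explicit the details the paper leaves implicit, namely the identification $\mu_{\mathcal B}\widehat G=\mu_{\mathcal K}\widehat G$, the computation $\W(F_K)=\W(K)$ via the compact-to-Hausdorff homeomorphism, and the reduction to infinite $\kappa$.
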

\begin{proof}
For abelian groups $G$, we have $\ir=\hbox{irrep}_1 (G)=\widehat G$. Thus
$\mu_{\mathcal K}\widehat H$ is $\kappa$-narrow. On the other hand,
since $\kappa<\W_K(G)$, it
follows that
$\mu_{\mathcal K}\ir=\mu_{\mathcal K}\widehat G$ may not be $\kappa$-narrow by  Theorem \ref{th_alfa_peso}.
This completes the proof.
\end{proof}

\begin{cor}\label{co_subgroup}
Let $G$ be a compact group. If $H$ is a dense subgroup of $G$
such that $\kappa=|H|<\W(G)$, then $H$ does not determine $G$.
\end{cor}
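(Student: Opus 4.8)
The plan is to contradict the assumption that $H$ determines $G$ by comparing the $\kappa$-narrowness of the two dual uniform spaces. By definition, if $H$ determines $G$ then the restriction map $R|_H:\mu_{\mathcal K}\widehat G\to\mu_{\mathcal K}\widehat H$ is an isomorphism of uniform spaces, and $\kappa$-narrowness is evidently preserved by uniform isomorphisms. Since $|H|=\kappa$, Proposition \ref{pro_determined} already yields that $\mu_{\mathcal K}\widehat H$ is $\kappa$-narrow. Hence it suffices to show that $\mu_{\mathcal K}\widehat G$ is \emph{not} $\kappa$-narrow whenever $\kappa<\W(G)$; this is the heart of the matter and mirrors the abelian argument, where $\W(G)=\W_k(G)$ for compact $G$.

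First I would reduce to a single dimension. Because $G$ is compact, Peter--Weyl gives an embedding of $G$ into a product $\prod_{[\pi]\in\widehat G}\U(d_\pi)$ of metrizable factors, so $\W(G)\leq|\widehat G|\cdot\aleph_0$. As $\W(G)>\kappa$ and there are only countably many dimensions, some $\widehat G_{n_0}$ must satisfy $|\widehat G_{n_0}|>\kappa$; otherwise $|\widehat G|=\sum_{n<\omega}|\widehat G_n|\leq\kappa$ and the embedding above would force $\W(G)\leq\kappa$. I will then show that this $\widehat G_{n_0}$ sits inside $\mu_{\mathcal K}\widehat G$ as a uniformly discrete subspace of cardinality exceeding $\kappa$.

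The key estimate is the following. Since $G$ is compact, $G\in\mathcal K(G)$ is the largest compact set, so the uniformity $\mu_{\mathcal K}$ on $\irn$ coincides with the uniformity of uniform convergence on all of $G$. For inequivalent irreducible representations $\varphi\not\sim\psi$ in $\irn$, the characters $\chi_\varphi=\operatorname{tr}\circ\varphi$ and $\chi_\psi=\operatorname{tr}\circ\psi$ are orthonormal in $L^2(G)$, whence $\|\chi_\varphi-\chi_\psi\|_\infty\geq\|\chi_\varphi-\chi_\psi\|_2=\sqrt2$. Using the conjugation-invariance of the trace together with $|\operatorname{tr}(A)|\leq n\|A\|$, for every $M\in\U(n)$ one obtains
$$\sup_{x\in G}\|\varphi(x)-M^{-1}\psi(x)M\|\geq\frac1n\sup_{x\in G}|\chi_\varphi(x)-\chi_\psi(x)|\geq\frac{\sqrt2}{n}.$$
Thus the pseudometric $\widehat d([\varphi],[\psi])=\inf_{M}\sup_{x}\|\varphi(x)-M^{-1}\psi(x)M\|$ separates distinct classes of $\widehat G_n$ by at least $\sqrt2/n$. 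The quotient map $Q_n$ is $1$-Lipschitz from the sup-metric to $\widehat d$, hence uniformly continuous; since $\mu_{\mathcal K}\widehat G_n$ is the finest uniformity making $Q_n$ uniformly continuous, it refines the $\widehat d$-uniformity. Under the latter the diagonal of $\widehat G_{n_0}$ is already a band, so $\widehat G_{n_0}$ is uniformly discrete in $\mu_{\mathcal K}\widehat G$, of power $>\kappa$. By the Isbell criterion recorded in Section~2, $\mu_{\mathcal K}\widehat G$ is therefore not $\kappa$-narrow.

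Putting the two halves together, a uniform isomorphism $R|_H$ would make $\mu_{\mathcal K}\widehat G$ inherit the $\kappa$-narrowness of $\mu_{\mathcal K}\widehat H$, contradicting the previous paragraph; hence $H$ does not determine $G$. The hard part is the middle step: establishing the uniform separation of inequivalent classes in the quotient uniformity $\mu_{\mathcal K}\widehat G_{n_0}$. The orthonormality of irreducible characters, transferred to the operator norm through the trace inequality, is exactly what renders the $n_0$-dimensional slice of the dual object uniformly discrete, and isolating a single dimension $n_0$ with $|\widehat G_{n_0}|>\kappa$ is what converts the bound $\W(G)>\kappa$ into the failure of $\kappa$-narrowness.
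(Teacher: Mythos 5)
Your proof is correct, and its overall skeleton coincides with the paper's: both derive a contradiction by showing that $\mu_{\mathcal K}\widehat H$ is $\kappa$-narrow (Proposition \ref{pro_determined}) while $\mu_{\mathcal K}\widehat G$ contains a uniformly discrete dimension slice of cardinality exceeding $\kappa$, so that $R|_H$ cannot be a uniform isomorphism. Where you genuinely depart from the paper is in how the two key ingredients are secured. First, the paper simply cites Heyer \cite{heyer70} for the uniform discreteness of $\mu_{\mathcal K}\widehat G_n$ when $G$ is compact; you prove it from scratch, noting that for compact $G$ the bornology $\mathcal K(G)$ has $G$ itself as largest member (so $\mu_{\mathcal K}$ is the sup-uniformity on $\irn$), and then combining Schur orthogonality of characters, $\|\chi_\varphi-\chi_\psi\|_\infty\geq\|\chi_\varphi-\chi_\psi\|_2=\sqrt 2$, with the trace bound $|\operatorname{tr}(A)|\leq n\|A\|$ and conjugation invariance to obtain the uniform separation $\sqrt2/n$ between inequivalent classes in a pseudometric that the quotient uniformity refines; this makes the argument self-contained, at the price of being specific to compact groups, which is all the corollary needs. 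Second, for the cardinality step the paper asserts, via Peter--Weyl, that $\W(G)=|\widehat G_N|$ for some natural $N$ and then invokes Theorem \ref{th_alfa_peso}; your weaker claim --- merely that some $|\widehat G_{n_0}|>\kappa$, since otherwise $|\widehat G|\leq\kappa$ would force $\W(G)\leq\kappa$ --- is all the contradiction requires, and it is in fact more robust: when $\W(G)$ has countable cofinality, the countable sum $\sum_{n<\omega}|\widehat G_n|$ can equal $\W(G)$ without any single summand doing so, so the paper's formulation has a small gap there that your counting argument avoids. Likewise, your use of Isbell's criterion from Section~2 to convert a uniformly discrete subspace of power greater than $\kappa$ into failure of $\kappa$-narrowness is cleaner bookkeeping than the paper's appeal to Theorem \ref{th_alfa_peso} at that point.
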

\begin{proof}
Since $G$ is compact, it follows that $\mu_{\mathcal K}\widehat G_n$ is uniformly
discrete for all $n<\omega$ (see \cite{heyer70}). On the other hand, by Peter-Weyl Theorem, we know that
$\W(G)=|\widehat G_N|$ for some natural $N$. Combining this fact with Theorem \ref{th_alfa_peso}, we obtain that
$\mu_{\mathcal K}\widehat G_N$ may not be $\kappa$-narrow. Now, $\kappa=|H|$ implies
that $\mu_{\mathcal K}\widehat H_n$ is $\kappa$-narrow for all $n<\omega$.
This completes the proof.
\end{proof}

As a consequence, we obtain.
\begin{cor}
Let $G$ be a compact or a topological abelian group. If $2^{|H|}<|G|$ then $H$ does not determine $G$.
\end{cor}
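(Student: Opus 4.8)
The plan is to convert the cardinal hypothesis $2^{|H|}<|G|$ into the weight hypothesis required by the two corollaries just established, and then to invoke them. Put $\kappa=|H|$. Since $\kappa\le 2^{\kappa}<|G|$, the subgroup $H$ is proper. Because determination is a property of \emph{dense} subgroups (a subgroup that is not dense trivially fails to determine $G$, its restriction map not being an isomorphism onto $\mu_{\mathcal K}\widehat H$), I may assume $H$ dense, so that $\D(G)\le\kappa$.

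In the compact case the reduction is immediate. Realizing $G$ as a subgroup of $\prod_{\varphi\in\widehat G}\mathbb U(n_\varphi)$ via Peter--Weyl gives $|G|\le (2^{\aleph_0})^{|\widehat G|}=2^{\W(G)}$. Hence $|H|\ge\W(G)$ would force $2^{|H|}\ge 2^{\W(G)}\ge|G|$, contradicting $2^{|H|}<|G|$; so $\kappa=|H|<\W(G)$, and Corollary \ref{co_subgroup} yields that $H$ does not determine $G$.

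For the abelian case I would pass through $\W_k(G)$. By the consequence of Theorem \ref{th_alfa_peso} exploited in the previous corollaries, $\mu_{\mathcal K}\widehat G$ is $\kappa$-narrow precisely when $\W(K)\le\kappa$ for every compact $K\subseteq G$, i.e. when $\W_k(G)\le\kappa$; on the other hand $\mu_{\mathcal K}\widehat H$ is always $\kappa$-narrow by Proposition \ref{pro_determined}. It therefore suffices to prove $\W_k(G)>\kappa$, for then $\mu_{\mathcal K}\widehat G$ is not $\kappa$-narrow and cannot be uniformly isomorphic to $\mu_{\mathcal K}\widehat H$. Here I would invoke a cardinal inequality for topological groups of the form $|G|\le 2^{\D(G)\cdot\W_k(G)}$ (cf. \cite{arh_tka:book}): were $\W_k(G)\le\kappa$, then together with $\D(G)\le\kappa$ it would give $|G|\le 2^{\kappa}=2^{|H|}$, contrary to hypothesis. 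Thus $\W_k(G)>|H|$, and the preceding corollary for abelian groups completes the argument.

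The step I expect to be the main obstacle is exactly this last cardinal estimate in the non-compact abelian setting. In the compact case everything collapses at once because $\W_k(G)=\W(G)$ and $|G|=2^{\W(G)}$, but for a general abelian group one must bound $|G|$ by the density and the compact weight simultaneously. The cleanest route seems to be to combine $\D(G)\le|H|$ (from the dense subgroup $H$) with the bound $\W(K)\le\W_k(G)$ on compact subsets; an attractive alternative that avoids the abstract inequality would be to produce \emph{directly} a compact set $K\subseteq G$ with $\W(K)>|H|$ whenever $2^{|H|}<|G|$, which is all that Theorem \ref{th_alfa_peso} actually needs. Once $\W_k(G)>|H|$ is secured, both cases are uniform: establish the strict inequality and quote the relevant corollary.
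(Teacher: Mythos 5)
Your strategy---convert $2^{|H|}<|G|$ into the hypotheses of the two corollaries proved immediately before the statement---is exactly what the paper's ``As a consequence, we obtain'' intends, and your compact case is correct and complete: for an infinite compact group one has $|G|\le 2^{\W(G)}$ (indeed $|G|=2^{\W(G)}$), since $\W(G)=|\widehat G|$ and $G$ embeds into $\prod_{\varphi}\mathbb U(n_\varphi)$; hence $2^{|H|}<|G|$ forces $|H|<\W(G)$, and Corollary \ref{co_subgroup} applies. (One quibble about your reduction to dense $H$: for compact non-abelian $G$ and non-dense $H$, the restriction of an irreducible representation to $H$ need not be irreducible, so $R|_H$ is not even well defined as a map into $\widehat H$; that, rather than a failure of injectivity, is the cheap reason why such $H$ cannot determine $G$.)

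The abelian half, however, has a genuine gap, and it is exactly the step you flag. You reduce correctly (via Theorem \ref{th_alfa_peso} and Proposition \ref{pro_determined}) to showing $\W_k(G)>|H|$, but you then derive this from an inequality $|G|\le 2^{\D(G)\cdot\W_k(G)}$ attributed with a ``cf.'' to \cite{arh_tka:book}. No such inequality appears there: $\W_k$ is not one of the standard cardinal functions, and no bound of this shape is available off the shelf; in particular it does not follow from $|X|\le 2^{\chi(X)\cdot\D(X)}$, because $\W_k$ does not bound the character (think of groups all of whose compact subsets are metrizable but which are nowhere first countable). The only general bound that does apply is Posp\'{\i}\v{s}il's $|G|\le 2^{2^{\D(G)}}$ for Hausdorff spaces, and it shows that the whole content of the corollary lies in the window $2^{|H|}<|G|\le 2^{2^{|H|}}$---precisely the regime where your inequality is the thing to be proved. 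To prove it one must exclude, for instance, a separable Hausdorff abelian group of cardinality greater than $\mathfrak c$ all of whose compact subsets are metrizable (essentially, a dense subgroup of $2^{\mathfrak c}$ of cardinality $>\mathfrak c$, containing a countable dense subgroup, but containing no compact subset of uncountable weight). Groups with only small compact subsets are a well-studied and delicate topic (pseudocompact groups with h-embedded countable subgroups, HFD-type constructions), and nothing in your argument rules such examples out; your suggested alternative, to ``produce directly'' a compact set of weight $>|H|$, is not an alternative but a restatement of the missing step. As it stands, your proposal proves the corollary for compact $G$ and leaves the general abelian case unproven---though, to be fair, the paper itself offers no justification for that half either.
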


In order to prove that every compact determined group is metrizable we need the following useful fact about quotients of
determined compact groups.

\begin{Lemma}\label{le_quotient}
Every quotient of a determined compact group is a determined compact group.
 %$G$ is a determined compact group and $N$ is a closed subgroup of $G$, then the quotient group $G/N$ is determined
\begin{proof} Let $G$ be a determined compact group and let $\pi: G\longrightarrow G/N$ be a canonical quotient map,
where $N$ is a closed normal subgroup of $G$. Given an arbitrary dense subgroup $H$ of $G/N$, we set $L=\pi^{-1}(H)$
that  is a dense subgroup of $G$. Firstly, observe that the map
$i_1:\mu_{\mathcal{K}} \hbox{irrep}_n(H)\longrightarrow \mu_{\mathcal{K}}\hbox{irrep}_n(L)$,
defined by $i_1(\varphi)=\varphi\circ\pi$ for all $\varphi\in \irns (H)$, is a uniform embedding
 (i.e., a uniform isomorphism onto its image). Indeed, it is clear that $i_1$ is one-to-one. On the other hand,
 if $C\in \mathcal{K}(L)$ then $\pi(C)\in \mathcal{K}(H)$ and
 $(i_1\times i_1)(B_{H}(\pi(C),\epsilon )=B_L(CN,\epsilon)\cap i_1(\hbox{irrep}_n(H))^2\subseteq B_L(C,\epsilon)$.
 Therefore $i_1$ is uniformly continuous. Conversely, if $K\in \mathcal{K}(H)$, since  $\pi^{-1}(K)\in \mathcal K(L)$,
 it follows that $(i_1\times i_1)^{-1}(B_L(\pi^{-1}(K),\epsilon)\cap i_1(\hbox{irrep}_n (H))^2=B_{H}(K,\epsilon)$.
 Therefore $i_1$ is a uniform embedding as claimed.

Now, let us verify that the map
$\widehat{i}_1:\mu_{\mathcal{K}} \widehat{H}_n\longrightarrow \mu_{\mathcal{K}}\widehat{L}_n$,
defined by $\widehat{i}_1([\varphi])=[\varphi\circ\pi]$ for all $\varphi\in \irns (H)$, is also a uniform embedding.
Indeed, consider the diagram
\[
\xymatrix{  \mu_{\mathcal{K}} \irns (H) \ar@{>}[rr]^{Q_n^{(1)}} \ar@{>}[d]^{i_1} & &
\mu_{\mathcal{K}}\widehat{H}_n \ar@<1ex>[d]^{\widehat{i}_1} \\
%&    &\\%
\mu_{\mathcal{K}} \irns (L) \ar@{>}[rr]_{Q_n^{(2)}} & &  \mu_{\mathcal{K}}\widehat{L}_n }%\ar@<1ex>[u]^{id^t}}
\]
\medskip

where $Q_n^{(i)}$, $1\leq i\leq 2$, are quotient maps. Then $\widehat{i}_1\circ Q_n^{(1)}=Q_n^{(2)}\circ i_1$ is
uniformly continuous by the commutativity of the diagram. As a consequence, $\widehat i_1$ is uniformly continuous
because it is defined on a quotient (uniform) space. On the other hand, since $G$ is determined, the restriction map
$R|_{L}:\mu_{\mathcal{K}}\widehat{G}_n :\longrightarrow \mu_{\mathcal{K}}\widehat{L}_n$ is a uniform isomorphism
and we have that $\mu_{\mathcal{K}}\widehat{L}_n$ is uniformly discrete because $\mu_{\mathcal{K}}\widehat{G}_n$
is (see \cite{heyer70}). Therefore $\widehat{i}_{1}^{-1}|_{\widehat{i}_1(\widehat H_n)}$ is also
necessarily uniformly continuous.
In like manner, the map
$\widehat{i}_2:\mu_{\mathcal{K}} (\widehat{G/N})_n\longrightarrow \mu_{\mathcal{K}}\widehat{G}_n$ is a uniform embedding
as well. Now,  we have the following commutative diagram
\[
\xymatrix{  \mu_{\mathcal{K}} (\widehat{G/N})_n \ar@{>}[rr]^{R|_{H}} \ar@{>}[d]^{\widehat i_2} & & \mu_{\mathcal{K}}
\widehat H_n \ar@<1ex>[d]^{\widehat i_1} \\
%&    &\\%
\mu_{\mathcal{K}} \widehat G_n \ar@{=}[rr]_{R|_{L}} & &  \mu_{\mathcal{K}} \widehat L_n }%\ar@<1ex>[u]^{id^t}}
\]

\noindent where $R|_{H}=\widehat i_1^{-1}|_{\widehat{i}_1(\widehat H_n)}\circ R|_{L}\circ \widehat i_2$ is a uniform isomorphism.
This completes the proof.
\end{proof}
\end{Lemma}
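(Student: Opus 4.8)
The plan is to take an arbitrary dense subgroup $H$ of the quotient $G/N$ and show that the restriction map $R|_H : \mu_{\mathcal K}\widehat{G/N} \longrightarrow \mu_{\mathcal K}\widehat H$ is an isomorphism of uniform spaces. Since both dual objects carry the free uniform sum of the pieces $\widehat{(\cdot)}_n$, it suffices to argue one dimension at a time, that is, to prove that each $R|_H : \mu_{\mathcal K}\widehat{G/N}_n \longrightarrow \mu_{\mathcal K}\widehat H_n$ is a uniform isomorphism. The device that brings the hypothesis on $G$ into play is to pull $H$ back along $\pi$: set $L=\pi^{-1}(H)$. Because $\pi$ is an open continuous surjection, $L$ is a dense subgroup of $G$, and since $G$ is determined the map $R|_L : \mu_{\mathcal K}\widehat G_n \longrightarrow \mu_{\mathcal K}\widehat L_n$ is already a uniform isomorphism. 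The whole argument then amounts to transporting this isomorphism across the liftings induced by $\pi$.

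First I would set up the lifting at the level of representations. Composition with $\pi$ sends an $n$-dimensional representation $\varphi$ of $H$ to $\varphi\circ\pi$ on $L$, and likewise a representation of $G/N$ to one of $G$; call these maps $i_1$ and $i_2$. The key point, and the place where compactness of $N$ is essential, is the exact correspondence of compact test sets: for $C\in\mathcal K(L)$ one has $\pi(C)\in\mathcal K(H)$, while for $K\in\mathcal K(H)$ the preimage $\pi^{-1}(K)$ is compact in $G$ (the quotient map by a compact subgroup is proper) and is contained in $L$, so $\pi^{-1}(K)\in\mathcal K(L)$. Matching the bands $B_H(\pi(C),\epsilon)$ and $B_L(C,\epsilon)$ through these correspondences shows that $i_1$, and analogously $i_2$, is a uniform embedding of $\mu_{\mathcal K}\irns(H)$ into $\mu_{\mathcal K}\irns(L)$.

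Next I would pass to equivalence classes. The canonical quotient maps $Q_n$ onto the dual objects fit into a commuting square $\widehat i_1\circ Q_n^{(1)}=Q_n^{(2)}\circ i_1$, which forces $\widehat i_1$, defined by $[\varphi]\mapsto[\varphi\circ\pi]$, to be uniformly continuous, since it is defined out of a quotient uniform space; the analogous reasoning handles $\widehat i_2$. For the reverse direction I would invoke the special feature of compact groups: $\mu_{\mathcal K}\widehat G_n$ is uniformly discrete, and because $R|_L$ is a uniform isomorphism the space $\mu_{\mathcal K}\widehat L_n$ is uniformly discrete too. As every map defined on a uniformly discrete space is uniformly continuous, $\widehat i_1^{-1}$ restricted to its image is automatically uniformly continuous, so $\widehat i_1$ and $\widehat i_2$ are genuine uniform embeddings.

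Finally I would assemble the commutative square relating the four maps, namely $\widehat i_1\circ R|_H=R|_L\circ\widehat i_2$. Since $R|_L$ is a uniform isomorphism and the two vertical maps are uniform embeddings, $R|_H$ is a uniform isomorphism onto its image; and because $H$ is dense in the compact group $G/N$ while $\mathbb U(n)$ is compact, $R|_H$ is already a bijection on dual objects, hence a uniform isomorphism. Carrying this out for every $n$ and taking the free uniform sum shows that $H$ determines $G/N$, and as $H$ was an arbitrary dense subgroup, $G/N$ is determined. The main obstacle I anticipate is the bookkeeping around the quotient uniformities, that is, checking that the induced maps on equivalence classes are well defined and that the band correspondence survives passage through the $Q_n$, together with making the properness argument for $\pi^{-1}(K)$ fully precise.
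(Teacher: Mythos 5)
Your proposal is correct and follows essentially the same route as the paper's proof: pulling $H$ back to $L=\pi^{-1}(H)$, establishing the lifting maps $i_1,i_2$ (and then $\widehat i_1,\widehat i_2$) as uniform embeddings via the compact-set correspondence $C\mapsto\pi(C)$, $K\mapsto\pi^{-1}(K)$ and the uniform discreteness of $\mu_{\mathcal K}\widehat G_n$ and $\mu_{\mathcal K}\widehat L_n$, and finally transporting the isomorphism $R|_L$ across the commutative square $\widehat i_1\circ R|_H=R|_L\circ\widehat i_2$. Your added observation that $R|_H$ is bijective on dual objects (since representations of a dense subgroup of a compact group extend to the whole group) is a point the paper leaves implicit, but it does not change the argument.
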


Next follows the main result in this section.
\begin{Theorem}
Every determined compact group is metrizable.
\end{Theorem}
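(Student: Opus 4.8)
The plan is to prove the contrapositive: if a compact group $G$ is non-metrizable, then $G$ is not determined, i.e., some dense subgroup of $G$ fails to determine it. The natural strategy is to produce, inside $G$, a dense subgroup $H$ whose cardinality is strictly smaller than $\W(G)$, and then invoke Corollary~\ref{co_subgroup} directly to conclude that $H$ does not determine $G$. For this to work I need a small dense subgroup, so the heart of the argument is a cardinality estimate: I would like to find a dense subgroup $H$ with $|H| < \W(G)$ whenever $G$ is non-metrizable and compact.

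First I would recall that for an infinite compact group $G$ one has $\W(G) = \D(G)$ only in the metrizable case; in general $\D(G)$ (the density) can be much smaller than $\W(G)$. In fact it is a standard structural fact that every compact group $G$ satisfies $\D(G) = \log \W(G)$ (the Hewitt--Marczewski--Pondiczery type estimate for compact groups), so that $\W(G) \leq 2^{\D(G)}$. Consequently, taking a dense subgroup $H$ of minimal cardinality $|H| = \D(G)$, non-metrizability ($\W(G) > \aleph_0$) forces $\D(G) < \W(G)$: indeed if we had $\D(G) = \W(G)$ then $\W(G) \leq 2^{\D(G)} = 2^{\W(G)}$ gives no contradiction by itself, so here I must be more careful and instead argue that a non-metrizable compact group always contains a dense subgroup of cardinality $< \W(G)$. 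The cleanest route is: pick a dense subgroup $H$ with $|H| = \D(G)$; since $\W(G) > \aleph_0$ and $\D(G) \leq \W(G)$ with $\W(G) \leq 2^{\D(G)}$, the density $\D(G)$ is the smallest cardinal with $2^{\D(G)} \geq \W(G)$, which for non-metrizable $G$ yields $\D(G) < \W(G)$ (as $\kappa < 2^\kappa$ always). Setting $\kappa = |H| = \D(G) < \W(G)$ puts us exactly in the hypothesis of Corollary~\ref{co_subgroup}.

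With this dense subgroup $H$ in hand, Corollary~\ref{co_subgroup} gives immediately that $H$ does not determine $G$: since $\kappa = |H| < \W(G)$, the uniform space $\mu_{\mathcal K}\widehat H$ is $\kappa$-narrow (by Proposition~\ref{pro_determined}), whereas $\mu_{\mathcal K}\widehat G_N$ fails to be $\kappa$-narrow for the dimension $N$ realizing $\W(G) = |\widehat G_N|$ (by Theorem~\ref{th_alfa_peso} together with the fact that $\mu_{\mathcal K}\widehat G_n$ is uniformly discrete for compact $G$). Hence the restriction map $R|_H$ cannot be a uniform isomorphism, so $G$ is not determined. Taking the contrapositive yields the theorem. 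The role of Lemma~\ref{le_quotient} is to handle a possible subtlety, namely reducing from the general structure of $G$ to manageable pieces if the density argument must be carried out on a quotient; since determinacy passes to quotients, one may work with a quotient whose weight is easier to compute.

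\emph{The main obstacle} I anticipate is the cardinal-arithmetic step verifying $\D(G) < \W(G)$ for non-metrizable compact $G$. The clean inequality $\W(G) \leq 2^{\D(G)}$ holds for all compact groups, and $\D(G) < 2^{\D(G)}$ is Cantor's theorem, but these two facts alone do not immediately force $\D(G) < \W(G)$ in the boundary cases (e.g.\ when $\W(G) = 2^{\aleph_0}$ one only gets $\D(G) \leq 2^{\aleph_0}$). The genuinely delicate point is therefore to guarantee a dense subgroup that is \emph{strictly} smaller than the weight; I expect this to require invoking the precise structure theory of compact groups—specifically that $\W(G) = |\widehat G|$ counted via the dual object and that a non-metrizable compact group admits dense subgroups of density character $\log \W(G) < \W(G)$—rather than crude cardinal arithmetic. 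This structural input, combined with Corollary~\ref{co_subgroup}, is what makes the proof go through.
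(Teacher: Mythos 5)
Your overall strategy (prove the contrapositive and invoke Corollary~\ref{co_subgroup}) matches the paper's, but there is a genuine gap at exactly the step you flagged as the main obstacle, and it cannot be repaired in the form you propose. You claim that a non-metrizable compact group $G$ always contains a dense subgroup $H$ with $|H| < \W(G)$, deducing this from $\D(G) = \log \W(G)$ together with Cantor's theorem. This inference is false: $\log \kappa = \kappa$ whenever $\kappa$ is a strong limit cardinal. For instance, if $G = \mathbb{T}^{\beth_\omega}$ (or any compact group of weight $\beth_\omega$), then $\D(G) = \log(\beth_\omega) = \beth_\omega = \W(G)$, so \emph{every} dense subgroup of $G$ has cardinality at least $\W(G)$, and Corollary~\ref{co_subgroup} can never be applied to $G$ itself. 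The two facts you cite, namely $\W(G) \leq 2^{\D(G)}$ and $\kappa < 2^{\kappa}$, are both true but simply do not combine to give $\D(G) < \W(G)$, as you yourself half-suspected in your closing paragraph.

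This is precisely why the paper does not work with $G$ directly but passes to a quotient. By the Peter--Weyl theorem, $\widehat G_n$ is uncountable for some $n$; choosing a set $\Gamma$ of $n$-dimensional irreducible representations with $|\Gamma| = \aleph_1$ and setting $N = \bigcap\{\ker \varphi : \varphi \in \Gamma\}$, the quotient $G/N$ embeds in $\mathbb{U}(n)^{\aleph_1}$ and has weight exactly $\aleph_1$. Since $\aleph_1 \leq 2^{\aleph_0}$, the quotient $G/N$ has a countable dense subgroup $H$, and now $|H| = \aleph_0 < \aleph_1 = \W(G/N)$, so Corollary~\ref{co_subgroup} applies \emph{to the quotient}. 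Lemma~\ref{le_quotient} is then essential rather than the ``possible subtlety'' you describe: it is the only mechanism for transferring the failure of determinedness from $G/N$ back to $G$. In short, your argument becomes correct once the false density estimate for $G$ is replaced by the paper's reduction to a quotient of weight $\aleph_1$, with Lemma~\ref{le_quotient} carrying the load.
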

\begin{proof}
Let $G$ be a compact group of uncountable weight. By the Peter-Weyl Theorem,
$\widehat G_n$ must be uncountable for some $n\in \omega$. Fix a set $\Gamma$ of
$n$-dimensional irreducible continuous representations with
$|\Gamma|=\aleph_1$. If $N = \bigcap \{ \ker (\varphi) : \varphi\in \Gamma \}$,
then the quotient group $G/N$ has weight $\aleph_1$ becausee it is embedded in $\mathbb U(n)^{\aleph_1}$.
It is well known that $G/N$ contains a countable dense subgroup $H$ (see \cite{comfort:handbook}).
By Lemma \ref{co_subgroup}, the subgroup $H$ does not determine $G/N$ and by Lemma \ref{le_quotient}
this means that $G$ may not be a determined group.
\end{proof}

\end{document}